\newtheorem{theorem}[equation]{Theorem}
\newtheorem*{IntroTheorem}{Theorem}
\newtheorem{lemma}[equation]{Lemma}
\newtheorem{definition}[equation]{Definition}
\newtheorem{remark}[equation]{Remark}
\newtheorem{definition and proposition}[equation]{Definition and Proposition}
\def\epsilon{\varepsilon}
\def\phi{\varphi}
\newcommand{\al}{\alpha}
\newcommand{\be}{\beta}
\newcommand{\ga}{\gamma}
\newcommand{\de}{\delta}
\newcommand{\ep}{\epsilon}
\newcommand{\si}{\sigma}
\newcommand{\om}{\omega}
\newcommand{\gadot}{{\dot{\ga}}}
\newcommand{\phidot}{{\dot{\phi}}}
\newcommand{\xiti}{{\ti{\xi}}}
\newcommand{\fti}{{\ti{f}}}
\newcommand{\xti}{{\ti{x}}}
\newcommand{\yti}{{\ti{y}}}
\newcommand{\zti}{{\ti{z}}}
\newcommand{\xbar}{\bar{x}}
\newcommand{\ybar}{\bar{y}}
\newcommand{\zbar}{\bar{z}}
\newcommand{\Fbar}{\bar{F}}
\def\N{{\mathbb N}}
\def\R{{\mathbb R}}
\newcommand{\mcF}{\mathcal F}
\newcommand{\mcG}{\mathcal G}
\newcommand{\mcM}{\mathcal M}
\newcommand{\mcV}{\mathcal V}
\newcommand{\mcW}{\mathcal W}
\newcommand{\mcX}{\mathcal X}
\newcommand{\mcY}{\mathcal Y}
\newcommand{\mcZ}{\mathcal Z}
\newcommand{\mcMhat}{\widehat{\mathcal M}}
\newcommand{\mfx}{\mathfrak x}
\newcommand{\mfxti}{\ti{{\mathfrak x}}}
\newcommand{\mfmhat}{{\hat{\mathfrak m}}}
\newcommand{\ti}{\tilde}
\newcommand{\x}{\times}
\newcommand{\del}{\partial}
\newcommand{\beq}{\begin{equation}}
\newcommand{\eeq}{\end{equation}}
\newcommand{\beqs}{\begin{equation*}}
\newcommand{\eeqs}{\end{equation*}}
\DeclareMathOperator{\Crit}{Crit}
\DeclareMathOperator{\grad}{grad}
\DeclareMathOperator{\Ind}{Ind}
\def\slashii#1{\setbox0=\hbox{$#1$}             
\dimen0=\wd0                                 
\setbox1=\hbox{\sl/} \dimen1=\wd1            
\ifdim\dimen0>\dimen1                        
\rlap{\hbox to \dimen0{\hfil\sl/\hfil}}   
#1                                        
\else                                        
\rlap{\hbox to \dimen1{\hfil$#1$\hfil}}   
\hbox{\sl/}                               
\fi}                                         %
\def\slashiii#1{\setbox0=\hbox{$#1$}#1\hskip-\wd0\hbox to\wd0{\hss\sl/\/\hss}}
\newcommand{\refgluingassociative}{Theorem \ref{gluingassociative}}
\newcommand{\refmodulispacecompact}{Theorem \ref{modulispacecompact}}
\newcommand{\refmodulicorners}{Subsection \ref{modulicorners}}
\newcommand{\refrelativeMorseIndex}{Remark \ref{relativeMorseIndex}}
\newcommand{\refmorseOnmanifoldsWithCorners}{Theorem \ref{morseOnManifoldsWithCorners}}
\newcommand{\refnglobular}{Definition \ref{nglobular}}
\newcommand{\refncategory}{Definition \ref{ncategory}}
\newcommand{\refmorsencategory}{Theorem \ref{morsencategory}}
\begin{document}


\title{Higher Morse moduli spaces and $n$-categories}


\author{Sonja Hohloch}             

\email{sonja.hohloch@epfl.ch}

%
%
\address{Section de Math\'ematiques, 
         \'Ecole Polytechnique F\'ed\'erale de Lausanne (EPFL), 
         SB MATHGEOM CAG, Station 8, 
         1015 Lausanne, 
         Switzerland}

\thanks{The author was partially supported by the DFG grant Ho 4394/1-1.}



\begin{abstract}
We generalize Cohen $\&$ Jones $\&$ Segal's flow category whose objects are the critical points of a Morse function and whose morphisms are the Morse moduli spaces between the critical points to an $n$-category.
The $n$-category construction involves repeatedly doing Morse theory on Morse moduli spaces for which we have to construct a class of suitable  Morse functions. It turns out to be an `almost strict' $n$-category, i.e. it is a strict $n$-category `up to canonical isomorphisms'.

\vspace{2mm}

\noindent
MSC2010 classification: 18B99, 18D99, 37D15, 57R99, 58E05.

\noindent
Keywords: Morse theory, $n$-category theory, flow category.
\end{abstract}

\maketitle



\section{Introduction}

The aim of the present paper is to study $n$-category structures in Morse theory. There are (at least) two good reasons to do so: On the one hand, there is an astonishing example and on the other hand there is the natural generalization of a category defined by Cohen $\&$ Jones $\&$ Segal \cite{cohen-jones-segal}.

\vspace{2mm}

Let us first have a look at the seminal example. Let $n \in \N_0$ and $0 \leq k \leq n$ and consider the $(n-k)$-sphere
\beqs
\mathbb S^{n-k}=\{(x_1,\dots, x_n) \in \R^n \mid x^2_1 + \dots + x_{n-k}^2=1, x_{n-k+1} = \dots =x_n=0 \}
\eeqs
with the standard metric.
Denote by $f_{n-k}\colon \mathbb S^{n-k} \to \R$, $f_{n-k}(x_1, \dots, x_{n-k}):=x_{n-k}$ the height function of $\mathbb S^{n-k}$ and by $N_{n-k}$ and $S_{n-k}$ its north and south pole which are the only critical points of $f_{n-k}$. Now consider the compactified Morse moduli space of unparametrized negative gradient flow trajectories between the north and the south pole denoted by $\mcMhat(N_{n-k}, S_{n-k}):= \overline{\mcM(N_{n-k}, S_{n-k}, f_{n-k}) \slash \R}$. For all $0 \leq k \leq n$, we observe $\mcMhat(N_{n-k}, S_{n-k}) \simeq \mathbb S^{n-k} \cap \{x_{n-k} = \dots =x_n=0\} \simeq \mathbb S^{n-(k+1)}$. 

\vspace{2mm}

This means that we can in fact iterate: Start with $k=0$ and consider $\mathbb S^n$ with the Morse function $f_n$. The Morse moduli space $\mcMhat(N_n, S_n)$ is isomorphic to $\mathbb S^{n-1}$ which is a nice manifold. Thus we can consider $k=1$ and choose $f_{n-1}$ as Morse function on $\mcMhat(N_n, S_n) \simeq \mathbb S^{n-1}$ and obtain the new Morse moduli space $\mcMhat(N_{n-1}, S_{n-1}) \simeq \mathbb S^{n-2}$. This game can be repeated until $k=n$. Roughly, we get a `filtration' of Morse moduli spaces with one lying `in' or `on' the other.

\vspace{2mm}

A natural question now is: What happens if we start with an arbitrary manifold instead of a sphere?
So let $M$ be a smooth, closed, $n$-dimensional manifold and $(f,g)$ a Morse-Smale pair on $M$, i.e.\ $f$ is a Morse function, $g$ a Riemannian metric and the (un)stable manifolds intersect each other transversely. The first question is: How do the arising Morse moduli spaces look like? Are they nice enough spaces to admit Morse theory? The literature tells us (cf.\ \refmodulispacecompact) that, under slight assumptions on the metric, the moduli spaces are manifolds with corners, i.e.\ manifolds modeled on $(\R_{\geq 0})^n$. On manifolds with boundary (possibly with corners), one can do Morse theory as has been shown by Braess \cite{braess}, Goresky $\&$ MacPherson \cite{goresky-macpherson}, Akaho \cite{akaho}, Kronheimer $\&$ Mrowka \cite{kronheimer-mrowka}, Ludwig \cite{ludwig}, Handron \cite{handron} and Laudenbach \cite{laudenbach}. 
On manifolds with boundary, there are two Morse theory approaches possible: 
\begin{enumerate}[(1)]
 \item 
Morse functions whose gradient vector field is {\em transverse} to the boundary. 
\item
Morse functions which induce a gradient vector field {\em tangent} to the boundary. 
\end{enumerate}
We will use the second option since it is nicely compatible with lower dimensional boundary strata and admits a complete gradient flow. {\em Thus all gradient vector fields in this paper are always tangent to the boundary}.

\vspace{2mm}

Now let $x$, $y \in \Crit(f)$ and consider the moduli space $\mcMhat(x,y,f)$ which is a manifold with corners. On $\mcMhat(x,y,f)$ we want to choose a Morse function whose gradient vector field is tangent to the boundary. More precisely, instead of `boundary' we should rather speak of {\em boundary strata}. The literature tells us (cf.\ \refmodulispacecompact) how the strata look like: The boundary is a union of products of (lower dimensional) Morse moduli spaces. This requires a `compatibility condition' for the Morse function in case different moduli spaces `share' certain strata. Moreover, for reasons which become later apparent, we want the Morse functions to decrease from higher dimensional strata to lower dimensional strata. Such Morse functions can be recursively constructed. Now choose such a Morse function on $\mcMhat(x,y,f)$ with a suitable metric, call the Morse function $\fti$ and consider its moduli spaces $\mcMhat(\xti, \yti, \fti)$ for $\xti$, $\yti \in \Crit(\fti)$. Once again, if we want to 
iterate, we have to ask ourselves: What kind of space is $\mcMhat(\xti, \yti, \fti)$? Can we do Morse theory on it? The construction of $\fti$, in particular the fact that $\fti$ is decreasing from higher to lower dimensional strata, enables us to prove that $\mcMhat(\xti, \yti, \fti)$ is again a manifold with corners (cf. \refmorseOnmanifoldsWithCorners). And this holds true for the Morse moduli spaces of a similar constructed Morse function $\ti{\fti}$ on $\mcMhat(\xti, \yti, \fti)$ such that we can continue to consider Morse moduli spaces on Morse moduli spaces. 

We consider the compactified {\em unparametrised} moduli spaces such that the dimension decreases at least by one in comparison to the dimension of the space on which we are working. This means that our iteration of Morse moduli spaces on Morse moduli spaces becomes trivial after at most $\dim M +1$ steps.

\vspace{2mm}

Before we investigate how the above iteration of moduli spaces gives rise to an $n$-category structure, let us have a look at the second motivation for this paper.

\vspace{2mm}

In the 1990's, Cohen $\&$ Jones $\&$ Segal \cite{cohen-jones-segal} came up with the following category: Let $M$ be a smooth closed manifold with a Morse-Smale pair $(f,g)$. Then the objects $Obj(\mcF)$ of the {\em flow category} $\mcF$ are given by the critical points of $f$, i.e.\ $Obj(\mcF)=\Crit(f)$, and the morphisms between two objects are given by the compactified Morse trajectory spaces, i.e.\ $Morph(x,y):=\mcMhat(x,y):= \overline{\mcM(x,y,f)\slash \R}$ for $x$, $y \in \Crit(f)=Obj(\mcF)$. 
According to Cohen $\&$ Jones $\&$ Segal \cite{cohen-jones-segal}, the classifying space $B\mcF$ of $\mcF$ is homeomorphic to $M$. If the gradient flow is not Morse-Smale the classifying space $B\mcF$ is only homotopy equivalent to $M$. These are certainly intriguing observations, but we are interested in $\mcF$ for a different reason: what is happening if we try to `iterate' this category? With that we mean to introduce a `second level' where the objects are given by the above defined morphisms and the new morphisms are `morphisms between morphisms'. 
More generally, define the objects of the $k$th level to be the morphisms of the $(k-1)$th level and the morphisms of the $k$th level to be the morphisms between the morphisms of the $(k-1)$th level. This iteration procedure is inspired by a paper by Baez \cite{baez} where it is used to motivate the notion of $n$-categories.

\vspace{2mm}

To the best of our knowledge, it is still not clear what the best definition of an $n$-category is. In the literature, there is a whole zoo of various definitions what $n$-categories are supposed to be. Leinster's book \cite{leinster} gives a good introduction to this topic.

On the first glance, $n$-category theory distinguishes between `weak' and `strict' $n$-categories. This distinction is (among others) related to the question if the composition of morphisms is `really' associative or only associative `up to some degree', i.e. if for the composition of three morphisms $\phi$, $\psi$, $\chi$ holds $(\phi \circ \psi )\circ \chi= \phi \circ (\psi \circ \chi) $ or only $(\phi\circ \psi )\circ \chi \sim \phi \circ (\psi \circ \chi) $ where $\sim$ may stand for instance for `homotopy equivalent' or something else.

It is known that `weak' and `strict' $n$-categories are equivalent for $n \in \{0,1,2\}$, but already for $n=3$ (and higher $n$) these notions differ. In the following, we will stick to the conventions of Leinster's book \cite{leinster}. The definition of a strict $n$-category is lengthy such that we will not line it out here in the introduction, but we refer the reader to \refnglobular\ and \refncategory\ or directly to Leinster's book. Since we will not work with weak $n$-categories we also refer the reader to Leinster's book for their definition.

\vspace{2mm}

In this paper, we will show that the iteration procedure of Morse moduli spaces as sketched above will give rise to an {\em almost strict} $n$-category. With {\em almost strict} we mean that our structure satisfies the conditions of a strict $n$-category {\em up to canonical isomorphisms}. Since the `up to whatever'-defect of a weak category is usually much bigger than just `up to canonical isomorphisms' we opt for calling the structure `almost strict' instead of `not very weak'. This may be up for discussion, but from a geometer's point of view it makes sense. For a geometer, for example, the cartesian product is associative whereas in fact one probably would have to say `associative up to canonical isomorphism'. Since the whole construction of compactified, unparametrized moduli spaces involves already taking equivalence classes etc.\ we would get nowhere if we would not admit `up to canonical isomorphism' to be negligible. Our contructions certainly do not need `too large' deformations.

\begin{IntroTheorem}
The above described iteration of Morse moduli spaces can be given the structure of an almost strict $n$-category. The resulting $n$-category is denoted by $\mcX$.
\end{IntroTheorem}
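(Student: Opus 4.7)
The plan is to construct $\mcX$ inductively. For the underlying $n$-globular set, declare the 0-cells to be $\Crit(f) \subset M$; having defined the $(k-1)$-cells together with their Morse moduli spaces at level $k-1$, the $k$-cells from a $(k-1)$-cell $a$ to a globularly parallel $(k-1)$-cell $b$ are the critical points of the Morse function chosen on $\mcMhat(a,b)$ at the previous stage. The source and target maps are tautological --- a critical point on $\mcMhat(a,b)$ has source $a$ and target $b$ --- so the globularity identities $ss=st$ and $ts=tt$ are automatic. By \refmorseOnmanifoldsWithCorners, the moduli-of-moduli are again manifolds with corners, so the recursion is well-defined up to level $n$, after which it becomes trivial by the dimension drop discussed earlier.

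Compositions $*_i$ for $0 \leq i < k \leq n$ should come from the boundary stratification of the ambient moduli spaces. By \refmodulispacecompact, the codimension-$j$ strata of $\mcMhat(x,y,f)$ are unions of products of lower-dimensional moduli spaces, so a pair of $k$-cells composable at level $i$ corresponds to a point of a product stratum inside the appropriate ambient moduli space; the composition is then read off from the inclusion of that stratum. Identity $k$-cells on $(k-1)$-cells would be adjoined formally, as is standard in flow categories, with compatibility among identities at different levels ensured by the monotonicity (higher- to lower-dimensional strata) built into the recursively constructed Morse functions.

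Verification of the strict $n$-category axioms then reduces to coherence properties of the cartesian product on these boundary strata. Associativity of $*_i$ follows from associativity of the product of moduli spaces in the relevant corner stratum; the unit laws follow from the defining behaviour of the formal identities; and the interchange law between two composition levels $i < j$ corresponds to identifying a pair of $k$-cells as a point of a doubly-refined product stratum whose decomposition is symmetric in $i$ and $j$. Each of these statements is only an equality after identifying moduli spaces with their boundary strata through canonical diffeomorphisms, which is precisely the source of the \emph{almost} in \emph{almost strict}. The main obstacle will be the interchange law: one must show that the recursively constructed Morse functions restrict to products of moduli spaces compatibly enough that horizontal and vertical composition are genuinely interchangeable (up to canonical isomorphism). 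This is where the full strength of the earlier construction of admissible, stratum-monotone Morse functions on moduli-with-corners enters, and where I expect the bulk of the bookkeeping to lie.
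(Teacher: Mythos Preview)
Your outline is essentially the paper's approach: iterated critical points form the globular set, composition is read off the product boundary strata, and the axioms reduce to properties of the cartesian product up to canonical isomorphism. Two points where the paper diverges from your sketch are worth flagging.

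First, identities are not adjoined formally. The paper takes the stationary moduli space $\mcMhat(x,x)$ (canonically identified with the point $x$) as the identity cell; the unit law then holds because composing with it amounts to taking a product with a point. Formal identities would also work, but you would then owe a separate compatibility check with every $\circ_p$.

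Second, the cells are recorded as pairs $\bigl(x_l,\mcMhat(x_{l-1},y_{l-1},f_{l-1[\cdots]})\bigr)$, where the bracketed subscript carries the full ``history'' $\left[\begin{smallmatrix} x_{l-2},\dots,x_0\\ y_{l-2},\dots,y_0\end{smallmatrix}\right]$. This bookkeeping is needed for source and target to be well-defined (the same point can sit on several shared strata) and is where the bulk of the effort actually lies; once it is in place, all six axioms are verified by direct symbolic manipulation of these indices. In particular, interchange is not the main obstacle you anticipate: the two sides differ only by swapping the middle entries of a 4-tuple, hence agree up to the obvious isomorphism. Associativity, on the other hand, genuinely uses the associativity-of-gluing result (\refgluingassociative), not just associativity of the cartesian product.
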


Let us summarize briefly our almost strict $n$-category. The intuitive `level structure' will be replaced by a so-called $n$-globular set (see \refnglobular) with source and target functions which `remember' on which `level' an element lives. The elements of our $n$-globular set are tupels of a moduli space and a critical point on this moduli space. The identity functions make use of the stationary moduli spaces $\mcMhat(x,x)$. And the composition is based on the gluing of Morse trajectories.

Recall that when we sketched the generalization of the n-sphere example, we required the Morse function to decrease from higher to lower dimensional strata. This is not just a technical assumption. If we admit an arbitrary Morse function, the Morse moduli spaces, more precisely their boundaries, become more complicated. And in particular we do not obtain an $n$-category structure, but rather some `opetopes' (cf. Hohloch $\&$ Ludwig \cite{hohloch-ludwig}).

\vspace{2mm}

So far, each in this way constructed almost strict $n$-category depends on a certain number of {\em chosen} Morse data. In order to show independence of the chosen data in Morse or Floer theory, often a so-called `homotopy of homotopy' argument is used (cf. Schwarz \cite{schwarz} or Salamon \cite{salamon}). The `homotopy of homotopies' induces a chain homomorphism on the chain complexes associated to the chosen Morse or Floer data. This homomorphism relates the chain complexes similarly as a so-called {\em distributor} (cf.\ Borceux \cite{borceux}) works in category theory. We hope to pursue this idea in a future work in order to relate such Morse $n$-categories to each other, hopefully obtaining an $\om$-category of $n$-categories. 

\vspace{2mm}

Up to the author's knowledge, the present work is the first one to deal with higher categories associated to Morse theory (with `higher' we mean $n \geq 3$). In symplectic geometry and knot theory, $2$-categories appear for example via the Wehrheim-Woodward category (see Wehrheim $\&$ Woodward \cite{wehrheim-woodward} and Weinstein \cite{weinstein}) or the $2$-category in Khovanov homology (see Khovanov \cite{khovanov}).

\vspace{2mm}

To understand the $n$-category of Morse moduli spaces better, the subsequent article Hohloch \cite{hohloch} defines two other $n$-categories $\mcV$ and $\mcW$ and $n$-category functors $\mcF \colon \mcX \to \mcV$ and $\mcG \colon \mcX \to \mcW$. Both categories are motivated by the Morse index and the dimension of the Morse moduli spaces. $\mcV$ `sees' the elements of $\mcX$ as tuples of vector spaces and homomorphisms spaces between these vector spaces whose dimension is induced by the Morse index and the dimension of the Morse moduli spaces. $\mcW$ captures only the Morse indices and thus consists of tuples of (nonnegative) integers. For the exact definitions, we refer the reader to Hohloch \cite{hohloch}. Taking and evaluating the Morse index and the dimension of the involved Morse moduli spaces leads to the functors $\mcF \colon \mcX \to \mcV$ and $\mcG \colon \mcX \to \mcW$.

\begin{IntroTheorem}[Hohloch \cite{hohloch}]
There are two other almost strict $n$-categories called $\mcV$ and $\mcW$ and $n$-category functors $\mcF \colon \mcX \to \mcV$ and $\mcG \colon \mcX \to \mcW$.
\end{IntroTheorem}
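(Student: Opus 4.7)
The plan is to construct $\mcV$ and $\mcW$ in parallel with $\mcX$, replacing at every level the pair (moduli space, critical point) by its linear-algebraic, respectively numerical, ``shadow,'' and then to read off $\mcF$ and $\mcG$ from these substitutions. Since both $\mcV$ and $\mcW$ are to be almost strict, I only need to produce an $n$-globular set in the sense of \refnglobular\ together with identities and compositions satisfying the axioms of \refncategory\ up to canonical isomorphism, and the functors will likewise only have to commute with the structure maps up to such isomorphisms.

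For $\mcW$ I would take, at each level $k$, an $n$-globular set whose elements are tuples of nonnegative integers recording the Morse indices appearing along the $k$ iterated Morse-function constructions used to build the corresponding level of $\mcX$. Source and target maps are obtained from those of $\mcX$ by forgetting the moduli-space data and keeping only the indices, and identities are tuples with vanishing final index drop. Composition on the top level is ordinary addition of the ``index drops'' $\Ind(x) - \Ind(y)$. The $n$-category axioms for $\mcW$ then collapse to associativity and commutativity of addition of integers, so $\mcW$ is in fact strict (hence a fortiori almost strict).

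For $\mcV$ I would attach, at each level and to each element of the corresponding set in $\mcX$, a tuple of finite-dimensional real vector spaces $\R^d$ together with $\Hom$-spaces between them, where the dimensions $d$ are computed from the Morse index of the relevant critical point and from the dimension of the associated Morse moduli space (cf.\ \refmodulispacecompact\ for the dimension formula). Composition on the top level is modelled on the canonical composition $\Hom(U,V) \times \Hom(V,W) \to \Hom(U,W)$, which is associative and unital only up to the standard canonical isomorphisms of vector spaces; this is precisely where the ``almost'' in almost strict enters. The functors $\mcG$ and $\mcF$ are then defined level by level by sending a pair (moduli space, critical point) to its tuple of Morse indices, respectively to its tuple of vector spaces, and by sending iterated globular elements of $\mcX$ to the corresponding iterated tuples.

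The main obstacle is to verify that $\mcG$ and $\mcF$ commute with composition at every one of the (at most) $\dim M + 1$ levels of the iteration. On the numerical side this reduces to additivity of Morse indices under gluing of trajectories, $\Ind(x) - \Ind(z) = (\Ind(x) - \Ind(y)) + (\Ind(y) - \Ind(z))$, applied recursively at each level of iterated Morse theory; on the linear-algebraic side it reduces to additivity of the moduli-space dimensions plus the usual behaviour of $\Hom$-spaces under composition. Both are then combined with associativity of the gluing map of Morse trajectories (\refgluingassociative) and matched against the canonical-isomorphism slack built into $\mcV$ and $\mcW$. The bookkeeping is notationally heavy because it must be carried out uniformly at every iterated level of $\mcX$, but no new geometric input is required beyond what is already used in the construction of $\mcX$ itself.
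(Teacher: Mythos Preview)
This theorem is not proved in the present paper. It is stated in the introduction as a result of the companion article \cite{hohloch}, and the only information the paper supplies is the informal description preceding the statement: $\mcV$ consists of tuples of vector spaces and homomorphism spaces with dimensions induced by Morse indices and moduli-space dimensions, $\mcW$ consists of tuples of nonnegative integers recording Morse indices, and the functors $\mcF$, $\mcG$ arise by extracting these data from the elements of $\mcX$. There is therefore no proof in this paper against which your proposal can be checked.

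That said, your outline is consistent with the paper's informal description of $\mcV$, $\mcW$, $\mcF$, and $\mcG$. Your identification of the key verification---that the functors respect composition, which reduces to additivity of index differences and moduli-space dimensions under gluing---is the natural one, and your remark that the bookkeeping is heavy but no new geometry is needed beyond what already goes into $\mcX$ matches the spirit of the construction. Whether the precise definitions (in particular the composition law in $\mcV$ via $\Hom$-spaces) agree with those in \cite{hohloch} cannot be determined from the present paper alone; for the actual details and verification of the $n$-category axioms you would have to consult that reference.
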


There were many new algebraic structures discovered and studied in (symplectic) geometry in the last decade which, apart from the already above mentioned reasons, got the author interested in studying new structures in Morse theory. So for instance the graded differential algebra (DGA) appeared, defined for knots and links by Chekanov \cite{chekanov}, which is a very special case of the more general Symplectic Field Theory (SFT) started by Eliashberg $\&$ Givental $\&$ Hofer \cite{eliashberg-givental-hofer}.

\subsection*{Organization of the paper}

In Section 2, we recall, introduce and construct whatever parts of Morse theory we need in the following sections. In Section 3, we recall the notion of strict $n$-categories. In Section 4, we construct our almost strict $n$-category $\mcX$ of Morse moduli spaces. Section 5 calculates some examples.

\subsection*{Acknowledgements}
First of all, the author wants to thank Gregor Noetzel for many and lengthy discussions in the beginning of this project. Moreover, the author is indebted to Eric Finster, Kathryn Hess and Michael Warren for explanations of n-category theory and to Dan Burghelea and Ursula Ludwig for discussions and useful hints concerning Morse theory. The author was partially supported by the DFG grant Ho 4394/1-1.


\section{Morse moduli spaces}


\subsection{Notations}

Let us start with recalling some notations from Morse theory. There are several approaches to Morse theory: The classical one uses level sets and attaching of handle bodies as e.g. described in Milnor's book \cite{milnor}. But there is also a dynamical approach via the gradient flow as for instance described in Schwarz' book \cite{schwarz}. We are interested in the dynamical version and summarize the setting briefly.

\vspace{2mm} 

Let $M$ be a closed manifold. A function $f \colon M \to \R$ is called a {\em Morse function} if its Hessian $D^2f$ is nondegenerate at the critical points $\Crit(f):=\{ x \in M \mid Df(x)=0\}$. 
At a critical point $x \in \Crit(f)$, this admits the definition of the {\em Morse index} $\Ind(x)$ as the number of negative eigenvalues of $D^2f(x)$.
Given a Riemannian metric $g$ on M, we denote by $\grad_{g}f$ the gradient of $f$ w.r.t. the metric $g$. This leads to the following autonomous ODE of the {\em negative gradient flow} $\phi_t$ of the pair $(f,g)$
\beqs
\phidot_t = - \grad_g f(\phi_t).
\eeqs
Given a critical point $x \in \Crit(f)$, we define the {\em stable manifold}
\beqs
W^s(f,x):= W^s(f,g,x):= \{ p \in M \mid  \lim_{t \to + \infty} \phi_t(p)=x\}
\eeqs
and the {\em unstable manifold}
\beqs
W^u(f,x):= W^u(f,g,x):= \{ p \in M \mid \lim_{t \to - \infty} \phi_t(p) =x\}.
\eeqs
A pair $(f,g)$ is called {\em Morse-Smale} if $W^s(f,g,x)$ and $W^u(f,g,y)$ intersect transversely for all $x$, $y \in \Crit(f)$. 
We define the {\em Morse moduli space} between two critical points $x$ and $y$ as the space
\beqs
\mcM(x,y):= \mcM(x,y, f,g):= 
\left\{
 \ga\colon \R \to M \left|
\begin{aligned}
 & \gadot(t) = - \grad_g f(\ga(t)), \\ 
 & \lim_{t \to - \infty} \ga(t)=x , \\ 
 & \lim_{t \to + \infty} \ga(t)=y
\end{aligned}
\right.
\right\}.
\eeqs
$\mcM(x,y)$ consists of the negative gradient flow lines running from $x$ to $y$. It can also be identified with $W^u(x,f) \cap W^s(f,y)$. 
For a Morse-Smale pair $(f,g)$, the moduli space $\mcM(x,y)$ is a smooth manifold of dimension $\Ind(x)- \Ind(y)$. If $\Ind(y)>\Ind(x)$ then the space $\mcM(x,y)$ is empty.
Given $\ga \in \mcM(x,y)$ and $\si \in \R$, then $\ga_\si$ with $\ga_\si(t):= \ga(t+\si)$ is a gradient flow line. Thus the moduli space carries an $\R$-action via $\R \x \mcM(x,y) \to \mcM(x,y)$, $(\ga, \si) \mapsto \ga_\si$. If we divide by the action, we obtain the {\em unparametrized} moduli space $\mcM(x,y)\slash \R$. 

\vspace{2mm}

We introduce for $x$, $y \in \Crit(f)$ with $x \neq y$ the notation $x>y$ if $\mcM(x,y) \neq \emptyset$.

\vspace{2mm}

Before we continue the discussion of the Morse moduli spaces, we need some notation about manifolds with corners.
There are different notions and conventions in the literature. Manifolds with corners had been studied first by Cerf \cite{cerf} and Douady \cite{douady} at the beginning of the 1960's. An overview over the various definitions and their differences may be found in Joyce \cite{joyce}.

\vspace{2mm}

For our purposes, an $m$-dimensional manifold with corners is an $m$-dimensional manifold which is locally modeled on $\R^m_+:=(\R_{\geq 0})^m$. In order to keep track of the boundary strata, we introduce the following additional notions.
Let $N$ be an $m$-dimensional manifold with corners and let $\psi=(\psi_1, \dots, \psi_m) \colon U \subseteq N \to \R^m_+$ be a chart.
For $x \in U$ we define 
\beqs
depth(x):=\#\{i \mid \psi_i(x)=0,\ 1 \leq i \leq m \}.
\eeqs
$depth(x)$ is independent of the chosen chart. A {\em face} of $N$ is the {\em closure} of a connected component of $\{x \in N \mid depth(x)=1\}$. If $k$ is the number of faces, we fix an {\em order} of the faces and denote them by $\del_1 N, \dots, \del_k N$. The quantity $depth(x)$ counts the number of faces intersecting in $x$. 
We call the connected components of $\{x \in N \mid depth(x)=l\}=:D_{\dim N -l}$ the {\em ($\dim N -l$)-strata} of $N$.
This yields a filtration $N=\bigcup_{0 \leq j \leq \dim N}D_j$ and suggests the following definition.

\begin{definition}
Let $N$ be an $m$-dimensional manifold with corners with $k$ faces $\del_1 N, \dots, \del_k N$. We call $N$ a {\em $\langle k \rangle$-manifold} if 
\begin{enumerate}[(a)]
 \item 
Each $x \in N$ lies in $depth(x)$ faces.
\item
$\del_1 N \cup \dots \cup \del_k N = \del N$.
\item
For all $1 \leq i,j \leq k$ with $i \neq j$ the intersection $\del_i N \cap \del_j N $ is a face of both $\del_i N$ and $\del_j N$. 
\end{enumerate}
\end{definition}

In this convention, $\del_i N \subset N$ is again a manifold with corners, but $\del N$ is not. We are following Joyce's \cite{joyce} definition where the integer $\langle k \rangle	$ has a priori nothing to do with the dimension $m$ of the manifold $N$ since, in the subsequent work in progress Hohloch $\&$ Ludwig \cite{hohloch-ludwig}, it is convenient to track the intersection behaviour of each face. Note that other authors like Laures \cite{laures} are defining $\del_i N$ to be a union of faces which allows choosing $k= \dim N$.

\vspace{2mm}

The standard example of a $\langle k \rangle$-manifold is $\R^k_+$ with faces $\del_i\R^k_+:= \{ x \in \R^k_+ \mid x_i=0\}$. $\langle 0 \rangle$-manifolds are manifolds without boundary and $\langle 1 \rangle$-manifolds are manifolds with one (smooth) boundary component.

\vspace{2mm}

$\langle k \rangle$-manifolds have nice properties, see J\"anich \cite{jaenich}, Joyce \cite{joyce}, Laures \cite{laures}. For example, $\langle k \rangle$-manifolds can be embedded into an euclidean space such that the faces meet each other perpendicular (cf. so-called {\em neat embeddings} in Laures \cite{laures}). And $\langle k \rangle$-manifolds admit well-defined collar neighbourhoods of their boundaries (cf. Laures \cite{laures}).

\vspace{2mm}

Now let us return to Morse moduli spaces.
Let for instance be $x$, $y$, $z \in \Crit(f)$ with $\Ind(x) > \Ind(y) > \Ind(z)$. As sketched in Figure \ref{breaking} (b), a sequence of trajectories $(\ga_n)_{n \in \N}$ from $x$ to $z$ may `break' in the limit into trajectories $\ga_{xy}$ from $x$ to $y$ and $\ga_{yz}$ from $y$ to $z$. This phenomenon is usually denoted by `breaking' and plays an important role if one wants to compactify unparametrized Morse moduli spaces.
More precisely, an unparametrized moduli spaces can be compactified by adding `broken trajectories' and we denote the compactification of $\mcM(x,z)\slash \R$ via adding broken trajectories by $\mcMhat(x,z):= \overline{\mcM(x,z)\slash \R}$. 
In order to obtain a nice structure for the compactification one needs to pose conditions on the metric.
If $f$ is a Morse function and if a metric $g$ is euclidean near the critical points of $f$ the we call $g$ an {\em $f$-euclidean metric}.

The following statement was used often as a folklore theorem. Proofs for different settings can be found in Burghelea \cite{burghelea}, Wehrheim \cite{wehrheim} and Qin \cite{qin1}, \cite{qin2}. We summarize the statement as follows.

\begin{theorem}
\label{modulispacecompact}
Let $M$ be a closed manifold, let $(f,g)$ be Morse-Smale and assume $g$ to be $f$-euclidean. Let $x$, $z \in \Crit(f)$ with $x>z$. Then there exists $k \in \N_0$ such that $\mcMhat(x,z)$ is an $(\Ind(x)-\Ind(z)-1)$-dimensional $\langle k \rangle$-manifold with corners and its boundary is given by 
\beqs
\del \mcMhat(x,z) = \bigcup_{\stackrel{ (\Ind(x) - \Ind(z) -1) \geq l \geq 0}{x > y_1 > \dots > y_l >z}} \mcMhat(x,y_1) \x\mcMhat(y_1, y_2) \x \dots \x \mcMhat(y_{l-1}, y_l) \x \mcMhat(y_l, z)
\eeqs
where $y_1$, \dots, $y_l \in \Crit(f)$. There is a canonical smooth structure on $\mcMhat(x,z)$.
\end{theorem}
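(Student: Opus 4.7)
The dimension count and the description of the boundary strata are classical: $\mcM(x,z)$ is cut out transversely of dimension $\Ind(x)-\Ind(z)$ by the Morse--Smale hypothesis, and the quotient by the free $\R$-action lowers the dimension by one. Compactness modulo breaking of a sequence $(\ga_n) \subset \mcM(x,z)/\R$ is obtained by the standard Palais--Smale argument: pass to a time-parametrised subsequence, extract a locally uniform $C^\infty_{\text{loc}}$ limit on $\R$, observe that the missing energy concentrates near intermediate critical points $y_1>\dots>y_l$, and thereby produce the broken trajectory $(\ga_{xy_1}, \ga_{y_1 y_2}, \dots, \ga_{y_l z})$. This gives the set-theoretic identification of $\mcMhat(x,z) \setminus (\mcM(x,z)/\R)$ with the union displayed in the statement. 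The role of $f$-euclidean $g$ is to make the flow near the critical points linear, so that convergence rates and local stable/unstable manifold theory are quantitatively under control.

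The heart of the proof is the construction of collar charts at the broken strata, i.e.\ the gluing theorem. The plan is to fix a broken trajectory $\ubar\ga = (\ga_0, \dots, \ga_l) \in \mcMhat(x,y_1) \x \dots \x \mcMhat(y_l,z)$ and, for small gluing parameters $\rh = (\rh_1, \dots, \rh_l) \in [0, \ep)^l$, construct a pre-glued approximate trajectory $\ga_{\ubar\ga, \rh}$ by cutting and pasting in the $f$-euclidean neighbourhoods of the $y_i$, using exponential decay of $\ga_i$ into $y_i$ and of $\ga_{i+1}$ out of $y_i$. Then apply a quantitative implicit function theorem / Newton iteration on the linearised flow operator (the operator is uniformly surjective with right inverse of norm bounded independently of $\rh$ by the Morse--Smale hypothesis at each breaking point) to deform $\ga_{\ubar\ga,\rh}$ into a genuine gradient flow line. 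This produces a homeomorphism from an open subset of $\bigl(\prod_i \mcMhat(y_{i-1},y_i) \bigr) \x [0,\ep)^l$ onto an open neighbourhood of $\ubar\ga$ in $\mcMhat(x,z)$, giving the local $\R^{\dim \mcMhat(x,z) -l}_+ \x \R^l$ model at a depth-$l$ point. The hardest step is showing that these gluing charts are smoothly compatible: one has to verify that two different ways of pre-gluing the same broken trajectory, and the associated Newton corrections, differ by a smooth reparametrisation. I would follow the explicit construction of Qin \cite{qin1,qin2} (or Wehrheim \cite{wehrheim}) and argue that the gluing parameter $\rh_i$ is, up to a smooth diffeomorphism, the difference of the time-shift coordinates of the two pieces meeting at $y_i$, hence a smooth boundary-defining function.

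To promote this to a $\langle k \rangle$-manifold structure, I would identify the faces explicitly. A codimension-one boundary stratum is $\mcMhat(x,y) \x \mcMhat(y,z)$ for a single intermediate $y$ with $x > y > z$; since $\Crit(f)$ is finite, there are only finitely many such $y$, and I set $k$ to be their number. The faces $\del_i \mcMhat(x,z)$ are the closures of these codimension-one strata, and conditions (a)--(c) in the definition of $\langle k \rangle$-manifold follow directly from the gluing charts: every broken trajectory $\ubar\ga$ of depth $l$ lies in exactly the $l$ faces indexed by its $l$ breaking levels (condition (a)); the union of the faces exhausts the boundary because every boundary point has at least one breaking (condition (b)); and the intersection of two faces is the locus of trajectories breaking at both of the corresponding intermediate critical points, which is itself a union of products of moduli spaces and hence a face of each (condition (c)). The canonical smooth structure is the one dictated by the gluing charts, whose compatibility was just discussed. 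The main obstacle will therefore be the smoothness of the gluing charts and their transition functions — the set-theoretic picture and the manifold-with-corners property near a fixed stratum are comparatively easy once the analytic gluing estimates are in hand.
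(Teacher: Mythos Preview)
The paper does not prove this theorem at all: it is stated as a folklore result, with the sentence ``Proofs for different settings can be found in Burghelea \cite{burghelea}, Wehrheim \cite{wehrheim} and Qin \cite{qin1}, \cite{qin2}'' immediately preceding the statement, and no proof follows. Your sketch is therefore not to be compared against anything in the paper itself; it is, in outline, the approach taken in the cited references (Qin, Wehrheim), namely compactness via broken trajectories followed by a quantitative gluing/Newton argument in the $f$-euclidean charts to produce corner charts, and your identification of the main difficulty (smooth compatibility of gluing charts) is exactly where those papers do the work.

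One small inaccuracy in your $\langle k\rangle$-manifold discussion: in the paper's convention a \emph{face} is the closure of a \emph{connected component} of the depth-one locus, not the closure of an entire stratum $\mcMhat(x,y)\times\mcMhat(y,z)$. Since either factor may be disconnected, a single intermediate critical point $y$ can contribute several faces, so $k$ is in general larger than the number of such $y$. This does not affect the verification of axioms (a)--(c), which still goes through by reading off the breaking levels from the gluing charts, but your sentence ``I set $k$ to be their number'' should be adjusted.
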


The `inverse procedure' of breaking is `gluing'. Roughly, the gluing procedure takes a broken trajectory $(\ga_{xy}, \ga_{yz}) \in \mcMhat(x,y) \x \mcMhat(y,z)$ and yields a Morse trajectory from $x$ to $z$.
If one wants to glue a multiply broken trajectory $(\ga_1, \dots, \ga_{l+1}) \in \mcMhat(x,y_1) \x \dots \x \mcMhat(y_l, z)$ the question of associativity of the gluing procedure arises. 
Gluing is indeed associative and, as a folklore theorem, it has been used a lot in the literature. 
Recently Qin \cite{qin3} and Wehrheim \cite{wehrheim} have written down proofs. For details, we refer to their works.

\begin{theorem}[\cite{qin3}, \cite{wehrheim}]
\label{gluingassociative}
Gluing is associative. 
\end{theorem}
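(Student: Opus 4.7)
My plan is to reduce associativity of gluing to a uniqueness statement for a Newton correction applied to a suitable two-parameter pre-gluing, and then to combine this with the canonical smooth structure on $\mcMhat(x,z)$ from \refmodulispacecompact. First, fix three consecutive flow lines $\ga_1 \in \mcMhat(x, y_1)$, $\ga_2 \in \mcMhat(y_1, y_2)$, $\ga_3 \in \mcMhat(y_2, z)$ between critical points $x > y_1 > y_2 > z$. Using the fact that $g$ is $f$-euclidean near each critical point, I would fix Morse normal charts at $y_1$ and $y_2$, and for gluing parameters $R_1, R_2 \geq R_0 \gg 0$ construct an approximate trajectory $\ga(R_1, R_2) \colon \R \to M$ by concatenating the $\ga_i$ with a smooth cut-off inside the Morse chart at $y_1$ (at scale $R_1$) and at $y_2$ (at scale $R_2$). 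The first task is to verify that $\ga(R_1, R_2)$ depends smoothly on $(R_1, R_2)$ and that its gradient-flow defect decays exponentially in $\min(R_1, R_2)$.

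Next, I recall that the one-parameter gluing $\ga' \#_R \ga''$ is constructed by applying the implicit function theorem to an exponentially weighted linearised flow operator, correcting a pre-glued approximation to a genuine trajectory uniquely inside an exponentially small ball in the corresponding weighted Sobolev norm. I would then apply this twice and observe that both iterated gluings
\beqs
(\ga_1 \#_{R_1} \ga_2) \#_{R_2} \ga_3 \qquad \text{and} \qquad \ga_1 \#_{R_1} (\ga_2 \#_{R_2} \ga_3)
\eeqs
are, for $R_1, R_2$ large, exponentially close in the weighted norm to the common two-parameter pre-gluing $\ga(R_1, R_2)$. Applying the uniqueness part of the implicit function theorem to the linearised flow operator along $\ga(R_1, R_2)$, both iterated gluings must coincide as elements of $\mcM(x, z)$, modulo the canonical $\R$-reparametrisation used when passing to $\mcMhat(x, z)$.

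To make the statement geometrically transparent, I would package this as an assertion about the canonical smooth structure on $\mcMhat(x, z)$ supplied by \refmodulispacecompact: the codimension-two stratum
\beqs
\mcMhat(x, y_1) \x \mcMhat(y_1, y_2) \x \mcMhat(y_2, z) \subset \del \mcMhat(x, z)
\eeqs
carries a canonical collar coming from the $\langle k \rangle$-manifold structure, and the two iterated gluing maps provide two \emph{a priori} distinct collar parametrisations of a neighbourhood of this stratum; the analytic step above says that both agree with the canonical collar. Associativity in $\mcMhat(x, z)$ then follows, and the same argument extends by induction on the number $l \geq 2$ of intermediate critical points to an arbitrary multiply broken trajectory.

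The main obstacle is the analytic step: the pre-gluing errors must be estimated in a norm simultaneously strong enough for the implicit function theorem to apply uniformly in $(R_1, R_2)$ and weak enough that both iterated one-parameter pre-gluings land inside its uniqueness radius. This forces a careful choice of exponential weights dictated by the smallest spectral gaps of $D^2 f$ at $y_1$ and $y_2$, together with uniform invertibility of the linearised flow operator along the entire broken trajectory, with bounds independent of $R_1$ and $R_2$. These are exactly the analytic estimates that Qin \cite{qin3} and Wehrheim \cite{wehrheim} carry out in detail, and I would invoke their results for the uniformity and uniqueness statements rather than reprove them from scratch.
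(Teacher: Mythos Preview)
The paper does not prove this theorem at all: it is stated with attribution to \cite{qin3} and \cite{wehrheim} and used as a black box. Your sketch is a reasonable outline of the standard argument (two-parameter pre-gluing, Newton--Picard correction, uniqueness from the implicit function theorem), and since you ultimately defer the crucial uniform estimates to the same two references, your proposal is consistent with the paper's treatment---indeed it supplies more detail than the paper itself, which simply cites the result.
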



\subsection{Morse moduli spaces on $\langle k \rangle$-manifolds: Our construction}

\label{modulicorners}

As for manifolds without boundary, there are several ways to define Morse theory on manifolds {\em with} boundary. There is the classical approach via handle attachment by Braess \cite{braess} for manifolds with boundary and finally by Goresky $\&$ MacPherson \cite{goresky-macpherson} for stratified spaces.
And there is the newer approach via the gradient flow described by Akaho \cite{akaho} and Kronheimer $\&$ Mrowka \cite{kronheimer-mrowka} for manifolds with smooth boundary. Ludwig \cite{ludwig} finally defined Morse theory with tangential vector field on stratified spaces.

\vspace{2mm}

For our purposes, we are interested in a Morse theory where the gradient vector field is tangential to the boundary. If the gradient vector field is tangent to the boundary it is in particular tangent to all lower strata of the boundary and therefore `compatible' with corners (where it simply vanishes). If we would require the gradient vector field to be transverse to the boundary it would be much more difficult to come up with a consistent definition at the corners.

\vspace{2mm}

Working with tangential gradient vector fields is a special case of Ludwig's \cite{ludwig} setting. But since Ludwig \cite{ludwig} is interested in setting up a Morse theory, she needs only the one and two dimensional moduli spaces. We are primarily interested in very special constructions of the Morse functions and higher dimensional Morse moduli spaces which are not covered in Ludwig \cite{ludwig}.

\vspace{2mm}

This is the rough outline of the following construction: Consider a Morse function on a smooth manifold. Its compactified Morse moduli spaces are manifolds with corners. On these manifolds with corners we choose `good' Morse functions and consider their compactified Morse moduli spaces which are again manifolds with corners. On these spaces repeat the procedure etc. In each step, we loose at least one dimension since dividing by the $\R$-action in $\mcM(x,y) \slash \R$ reduces the dimension by one. Therefore the compactified moduli spaces will become zero dimensional after a finite number of iterations and the iteration becomes trivial.

\vspace{2mm}

Let $M$ be a closed manifold. 
Let $(f_0, g_0)$ be a Morse-Smale pair consisting of a Morse function $f_0$ with $f_0$-euclidean metric $g_0$. Let $x_0$, $z_0 \in \Crit(f_0)$ be distinct critical points and consider $\mcMhat(x_0, z_0, f_0)$. If this moduli space is not empty then, by \refmodulispacecompact, it is a manifold (possibly) with corners.
The boundary of $\mcMhat(x_0, z_0, f_0)$ is of the form
\beqs
\del \mcMhat(x_0,z_0,f_0) = \bigcup_{\stackrel{ (\Ind(x_0) - \Ind(z_0) -1) \geq l \geq 0}{x_0 > y^1_0 > \dots > y^l_0 >z_0}} \mcMhat(x_0,y^1_0, f_0) \x \dots \x \mcMhat(y^l_0, z_0, f_0)
\eeqs
where $y^1_0$, \dots, $y^l_0 \in \Crit(f_0)$. If we apply the formula recursively to the factors of the product we can also write
\beqs
\del \mcMhat(x_0, z_0, f_0)= \bigcup_{y_0 \in \Crit(f_0)} \mcMhat(x_0, y_0, f_0) \x \mcMhat(y_0, z_0, f_0).
\eeqs
Keep in mind that a moduli space may have several connected components. 
By labelling the components of depth one by $\del_1\mcMhat(x_0, z_0, f_0)$, \dots, $\del_k\mcMhat(x_0, z_0, f_0)$, we give $\mcMhat(x_0, z_0, f_0)$ the structure of a $\langle k \rangle $-manifold for some $k \in \N_0$. In other words, we are dealing with a stratified space. And $\mcMhat(x_0, z_0, f_0)$ might share strata with other moduli spaces $\mcMhat(\xti_0, \zti_0, f_0)$ for $\xti_0$, $\zti_0 \in \Crit(f_0)$.

\vspace{2mm}

Now we want to define a Morse function $f_1$ and a metric $g_1$ on $\mcMhat(x_0, z_0, f_0)$ for all $x_0$, $z_0$ which is compatible with the stratification and the `sharing' of strata. Moreover, the metric should be euclidean near the critical points and the gradient vector field should be {\em tangential} to the strata of the boundary. In addition, the flow should flow from higher dimensional strata to lower dimensional strata, but never from lower dimensional strata to higher dimensional ones. 


\subsubsection*{{\bf 0-strata for $f_1$:}}

First recall that $\Crit(f_0)$ is a partially ordered set: We have $x_0 \geq y_0$ if and only if there is a flow line from $x_0$ to $y_0$. Here we also allow the stationary flow line in order to have $x_0 \geq x_0$. If we use the notation $x_0 > y_0$ we assume $x_0 \neq y_0$. If $\Ind(x_0)-\Ind(z_0)=1$, the space $\mcMhat(x_0, z_0, f_0) $ is zero dimensional (and compact) and thus a finite union of points. 
Now choose values for $f_1$ on the zero dimensional moduli spaces such that for mutually distinct $x_0 \geq y_0 \geq z_0$ we have 
\beqs
f_1|_{\mcMhat(x_0, y_0, f_0)}=:f_{1 \left[ \begin{smallmatrix} x_0 \\ y_0 \end{smallmatrix} \right]}
> 
f_{1 \left[ \begin{smallmatrix} y_0 \\ z_0 \end{smallmatrix} \right]} := f_1|_{\mcMhat(y_0, z_0, f_0)} >0
\eeqs
where we choose the same value for all connected components of a moduli space. Note that we require the function to be strictly positive. The reason will become apparent later. Moreover assume that different moduli spaces have different values for $f_1$. Since there are only finitely many critical points we can achieve this easily. 

Let us remark that the zero dimensional boundary strata are corners. By construction, they will turn out to be critical points. Note that Akaho \cite{akaho} also constructs his Morse function to be nonzero at the critical points on the boundary.

To the moduli space of stationary curves $\mcMhat(x_0, x_0, f_0)$, we formally assign the value $0$ to $f_1$. (Note that some people consider $\mcMhat(x_0, x_0, f_0) $ as the empty set.) 

The $0$-strata are automatically critical points since the gradient has to be tangential to all strata meeting at a $0$-dimensional stratum. This is only possible if the gradient vanishes. In accordance with this, we define the metric
\beqs
g_{1\left[\begin{smallmatrix} x_0 \\ y_0 \end{smallmatrix} \right]}:= g_1|_{\mcMhat(x_0, y_0, f_0)}:=g_{eucl}
\eeqs
to be euclidean.


\subsubsection*{{\bf Morse functions on cartesian products:}}

The product structure of the boundary of a compactified Morse moduli space suggests to choose a Morse function on the moduli space which is naturally compatible with the product structure of the boundary. Such Morse functions are obtained as follows:
Let $A$ and $B$ be smooths $n_A$- resp. $n_B$-dimensional manifolds equipped with Morse functions $f_A$ and $f_B$ and associated metrics $g_A$ and $g_B$. On the product $C:= A\x B$, consider $f\colon C \to \R$, $f(a, b):=f_A(a) + f_B(b) $ and $g:=g_A \oplus g_B$. It holds
\beqs
\grad_g f=(\grad_{g_A}f_A, \grad_{g_B}f_B)
\eeqs
and therefore $(a,b) \in \Crit(f)$ if and only if $a \in \Crit(f_A)$ and $b \in \Crit(f_B)$. We have $\Ind(a, b)=\Ind(a)+\Ind(b)$ and thus the set $\Crit_k(f)$ of critical points of $f$ with index $k$ is given by
\beqs
\Crit_k(f)= \{(a,b) \in \Crit(f_A) \x \Crit(f_B) \mid \Ind(a)+ \Ind(b)=k\}.
\eeqs
The equation $\gadot(t)=\grad_g f(\ga(t))$ splits into 
\beqs
\left\{
\begin{aligned}
& \gadot_A(t)=\grad_{g_A}f_A(\ga_A(t)), \\
& \gadot_B(t)=\grad_{g_B}f_B(\ga_B(t))
\end{aligned}
\right.
\eeqs
and thus there is a flow line $(\ga_A, \ga_B)$ from $(a_-,b_-) \in \Crit(f)$ to $(a_+, b_+) \in \Crit(f)$ if and only if there are flow lines $\ga_A$ and $\ga_B$ from $a_-$ to $a_+$ and from $b_-$ to $b_+$. In particular, this requires $\Ind(a_-) \geq \Ind(a_+)$ and $\Ind(b_-) \geq \Ind(b_+)$. In order to obtain flow lines of relative index 1, either $\ga_A$ or $\ga_B$ has to be constant.

Note that one cannot simply identify $\mcMhat((a_-, b_-), (a_+, b_+),f)$ with $\mcMhat(a_-, a_+, f_A) \x \mcMhat(b_-, b_+, f_B)$. The first space has dimension 
\beqs
(\Ind(a_-) + \Ind(b_-))-(\Ind(a_+)- \Ind(b_+) )-1
\eeqs
and the second one $(\Ind(a_-) + \Ind(b_-))-(\Ind(a_+)- \Ind(b_+) )-2$. 
Given trajectories $\ga_A$ and $\ga_B$, also $\ga_A^\si(t):=\ga_A(t+\si)$ and $\ga_B^\tau(t):=\ga_B(t+\tau)$ are Morse trajectories. Thus $\ga^{\si, \tau}(t):=(\ga_A^\si (t), \ga^\tau_B(t))$ is a solution, but changing $(\si, \tau)$ changes the geometric shape of $\ga$ and not its parametrisation.


\subsubsection*{{\bf Morse index on strata:}}

Let $M$ be a $\langle k \rangle$-manifold with faces $\del_1 M$, \dots, $\del _k M$. Let $\ep =(\ep_1, \dots, \ep_k) \in \{0, 1\}^k$ and set $M(\ep):= \bigcap _{\ep_i=0}\del_i M$ with $M(1, \dots, 1):=M$. Let $f\colon M \to \R$ be a Morse function. For $\ep$, $\de \in \{0, 1\}^k$ we write $\ep \geq \de$ if $\ep_i \geq \de_i$ for all $1 \leq i \leq k$.

Let $\ep$, $\de \in \{0, 1\}^k$ with $\de \geq \ep$ and $x \in M(\ep) \subset M$ a critical point. Then we define the Morse index $\Ind_{\de \geq \ep}(x)$ of $x \in M(\ep) \subset M(\de)$ as the number of negative eigenvalues of $Df(x)$ in $T_xM(\de)$ and abbreviate $\Ind_\ep(x):=\Ind_{\ep \geq \ep}(x)$.

If there are no critical points in a small enough collar neighbourhood of each strata and if the Morse function is constructed radially, one deduces at once:

\begin{remark}
\label{relativeMorseIndex}
If the negative gradient flow of a Morse function flows from higher to lower dimensional strata then $\Ind_\ep(x)=\Ind(x)$ for all $\ep \in \{0,1 \}^k$ and $x \in \Crit(M)$.
\end{remark}


\subsubsection*{{\bf l-strata for $f_1$ with $l \geq 1$}}

For higher dimensional strata, we define $f_1 $ recursively:
Consider $x_0$, $z_0 \in \Crit(f_0)$ with $\Ind(x_0) - \Ind(z_0) = l+1$ which implies $\dim \mcMhat(x_0, z_0, f_0)=l$ for $l \geq 1$ in case $\mcMhat(x_0, z_0, f_0 ) \neq \emptyset$. Assume that we already defined the Morse function $f_1$ and $f_1$-euclidean metric $g_1$ in the suitable manner on all $0$-, \dots, $(l-1)$-strata.
There are two cases:

\vspace{2mm}

{\em Case 1:} The boundary is empty: If $\del\mcMhat(x_0, z_0, f_0)=\emptyset$, then there are no restrictions on the choice of $f_{1 \left[ \begin{smallmatrix}x_0 \\ z_0 \end{smallmatrix} \right]}:= f_1|_{\mcMhat(x_0, z_0, f_0)}$ apart from it being larger than on the lower dimensional strata. And the only restriction on $g_{1 \left[ \begin{smallmatrix}x_0 \\ z_0 \end{smallmatrix} \right]}:= g_1|_{\mcMhat(x_0, z_0, f_0)}$ is that it has to be euclidean near the critical points of $f_{1 \left[ \begin{smallmatrix}x_0 \\ z_0 \end{smallmatrix} \right]}$.

\vspace{2mm}

{\em Case 2:} The boundary is not empty: If 
\beqs
\emptyset \neq \del\mcMhat(x_0, z_0, f_0)= \bigcup_{y_0 \in \Crit(f_0) } \mcMhat(x_0, y_0, f_0) \x \mcMhat(y_0, z_0, f_0) ,
\eeqs
then the highest dimensional stratum in $\del\mcMhat(x_0, z_0, f_0)$ is an $(l-1)$-stratum. In $\mcMhat(x_0, y_0, f_0)$, the highest dimensional one is an $(l-1-k)$-stratum and, on the space $\mcMhat(y_0, z_0, f_0)$, the highest dimensional stratum is an $k$-stratum where $0 \leq k \leq l-1$. We already have Morse functions $f_{1\left[ \begin{smallmatrix}x_0 \\ y_0 \end{smallmatrix} \right]}$ and $f_{1\left[ \begin{smallmatrix}y_0 \\ z_0 \end{smallmatrix} \right]}$ and metrics $g_{1\left[ \begin{smallmatrix}x_0 \\ y_0 \end{smallmatrix} \right]}$ and $g_{1\left[ \begin{smallmatrix}y_0 \\ z_0 \end{smallmatrix} \right]}$ on 
$\mcMhat(x_0, y_0, f_0)$ and $\mcMhat(y_0, z_0, f_0) $. For coordinates $(a,b) \in \mcMhat(x_0, y_0, f_0) \x \mcMhat(y_0, z_0, f_0) \subset \del\mcMhat(x_0, z_0, f_0)$ we define 
\beqs
f_1|_{\del \mcMhat(x_0, z_0, f_0)}(a,b)
:= f_{1\left[ \begin{smallmatrix}x_0 \\ y_0 \end{smallmatrix} \right]}(a)
+
f_{1\left[ \begin{smallmatrix}y_0 \\ z_0 \end{smallmatrix} \right]}(b)
\eeqs
on the connected components. Since we required the Morse function to be positive on the zero dimensional moduli spaces, by induction, we have $f_1|_{\del \mcMhat(x_0, z_0, f_0)}(a,b)
>  f_{1\left[ \begin{smallmatrix}x_0 \\ y_0 \end{smallmatrix} \right]}(a)$ and $f_1|_{\del \mcMhat(x_0, z_0, f_0)}(a,b) > f_{1\left[ \begin{smallmatrix}y_0 \\ z_0 \end{smallmatrix} \right]}(b)$.
The purpose is that the value of $f_1$ rises when we pass from lower to higher dimensional strata since we want the negative gradient flow to flow strictly from higher strata to lower ones.

Analogously we define the metric via 
\beqs
g_1|_{\del \mcMhat(x_0, z_0, f_0)}
:= g_{1\left[ \begin{smallmatrix}x_0 \\ y_0 \end{smallmatrix} \right]}
\oplus
g_{1\left[ \begin{smallmatrix}y_0 \\ z_0 \end{smallmatrix} \right]}.
\eeqs
Note that the product of two euclidean metrics is euclidean. Thus $g_1|_{\del \mcMhat(x_0, z_0, f_0)}$ is euclidean near the critical points of $f_1|_{\del \mcMhat(x_0, z_0, f_0)}$.

\vspace{2mm}

Now we extend the Morse function and the metric to the interior of $\mcMhat(x_0, y_0, f_0)$ such that the gradient vector field is tangential to the boundary, i.e. tangential to every stratum, and the metric is euclidean near the critical points. Moreover, the extension of the Morse function is forbidden to have critical points in an small collar neighbourhood of the boundary (on the boundary, it may have critical points).
Explicit examples of this type of construction can be found in Ludwig \cite{ludwig} and Akaho \cite{akaho}. One chooses a collar neighbourhood of the boundary and considers its normal bundle. In the interior of a face, one can follow Akaho's \cite{akaho} construction for manifolds with boundary. He gives an explicit formula for the Morse function. Recall that a corner (or lower boundary stratum) of a $\langle k \rangle$-manifold is modeled on a quadrant (or half space) of $\R^n$ such that one can construct a suitable Morse function `by hand'. Then patch it together.
Denote the new Morse function by $f_{1\left[ \begin{smallmatrix}x_0 \\ z_0 \end{smallmatrix} \right]}:= f_1|_{\mcMhat(x_0, z_0, f_0)}$.
Proceed analogously with the metric and make it euclidean near the critical points of the Morse function and obtain $g_{1\left[ \begin{smallmatrix}x_0 \\ z_0 \end{smallmatrix} \right]}:= g_1|_{\mcMhat(x_0, z_0, f_0)}$.

\vspace{2mm}

This finishes the construction of $f_1$ on Morse moduli spaces of $f_0$: The Morse function $f_1$ takes the cartesian product structure of the boundary into account, behaves naturally with restriction to lower dimensional strata and its negative gradient flow flows strictly from higher strata to lower strata.


\subsubsection*{{\bf The construction of $f_2$ on the moduli spaces of $f_1$:}}

Assume for a moment that the negative gradient flow of the Morse function does not only flow from higher to lower, but {\em also} from lower to higher strata as sketched below in Figure \ref{breaking} (a).

\begin{figure}[h]

\begin{center}

\input{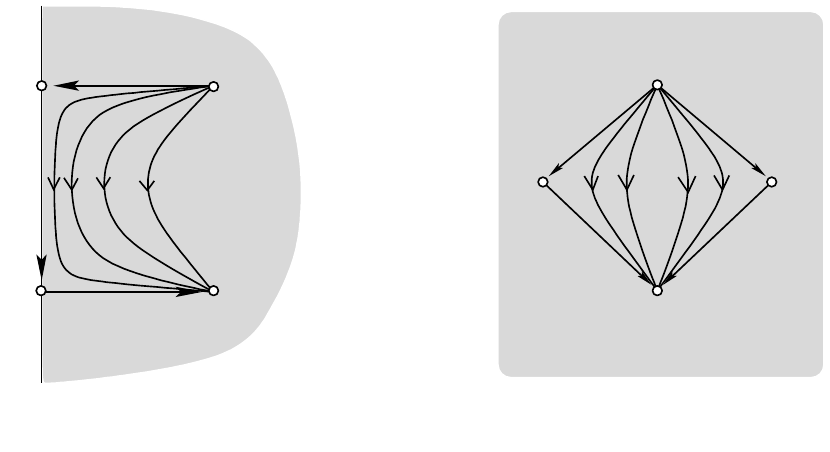_t}
\caption{Breaking of trajectories: (a) on the boundary, (b) in the interior.}
\label{breaking}
\end{center}

\end{figure}

As sketched in Figure \ref{breaking} (a), a trajectory between two critical points $x$ and $z$ in the interior of the manifold with index difference $\Ind(x) - \Ind(z)=1$ may actually break via the boundary into {\em three} trajectories instead of only {\em two} without involvement of the boundary (cf. Figure \ref{breaking} (b)). This phenomenon is explained in Akaho \cite{akaho} and Kronheimer $\&$ Mrowka \cite{kronheimer-mrowka}. It is due to the following observation. W.l.o.g let $y' \in \del_1M=:M(\ep)$ with $\ep=(0,1, \dots, 1)$ and $M=M(\de)$ with $\de=(1, \dots, 1)$. Then $\Ind_\ep(y') < \Ind_{\ep \leq \de}(y')$, i.e. it matters if we consider $y'$ as critical point of the Morse function as function restricted to the boundary or as function on $M$.

\vspace{2mm}

The boundary of the moduli spaces $\mcMhat(x,z)$ in Figure \ref{breaking} (a) has still product structure as shown in Akaho \cite{akaho}. But for the $n$-category structure in the later sections we need to be able to `compose' (i.e. glue) two `connecting' trajectories --- and we cannot glue just two of the three parts of the broken trajectories in Figure \ref{breaking} (a) since there {\em is no} trajectory from $x$ to $y'$ and also none from $y$ to $z$. Therefore we need to exclude such situations if we want to define an $n$-category later on.

\vspace{2mm}

This dilemma is solved by using \refrelativeMorseIndex: Impose the assumption that the Morse function is increasing from lower dimensional strata to higher dimensional ones. Equivalently, the negative gradient flow flows strictly from higher to lower strata and never from lower to higher strata. Then the Morse index does not depend on the strata and we do not have multiple breaking of trajectries of index difference one. An analogous statement holds true for critical points with higher index difference. We conclude

\begin{theorem}
\label{morseOnManifoldsWithCorners}
Let $f$ be a Morse function on a compact $\langle k \rangle$-manifold whose negative gradient flow flows from higher to lower strata, but not from lower to higher ones. Assume the metric to be euclidean near the critical points. 
 Let $x$, $z \in \Crit(f)$ with $x>z$. Then there exists $k \in \N_0$ such that $\mcMhat(x,z)$ is an $(\Ind(x)-\Ind(z)-1)$-dimensional $\langle k \rangle$-manifold with corners and its boundary is given by 
\beqs
\del \mcMhat(x,z) = \bigcup_{\stackrel{ (\Ind(x) - \Ind(z) -1) \geq l \geq 0}{x > y_1 > \dots > y_l >z}} \mcMhat(x,y_1) \x\mcMhat(y_1, y_2) \x \dots \x \mcMhat(y_{l-1}, y_l) \x \mcMhat(y_l, z)
\eeqs
where $y_1$, \dots, $y_l \in \Crit(f)$. There is a canonical smooth structure on $\mcMhat(x,z)$.
\end{theorem}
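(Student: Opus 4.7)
\emph{Proof plan.} The strategy is to reduce to the closed-manifold case (\refmodulispacecompact) by exploiting the two hypotheses: tangentiality of $\grad_g f$ to every stratum, and strict monotonicity of $f$ from lower-dimensional to higher-dimensional strata. Tangentiality makes each stratum $M(\ep)$ invariant under the flow, so the local dynamical picture near a critical point $y \in M(\ep)$ decouples into a tangential part (living inside $M(\ep)$) and a normal part (in the directions transverse to $M(\ep)$). Monotonicity, combined with \refrelativeMorseIndex, guarantees that the Morse index $\Ind_\ep(y)$ is stratum-independent and equals the ``total'' index $\Ind(y)$.

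First, at a critical point $y \in M(\ep)$ I apply the Morse lemma in corner coordinates to obtain local coordinates $(u,v)$ with $u_i\geq 0$ collar-normal to $M(\ep)$ and $v$ tangential, in which $f = f(y) + \sum_i c_i u_i + Q(v)$ with $c_i>0$ (forced by monotonicity) and $Q$ a non-degenerate quadratic form of index $\Ind(y)$. From this normal form one reads off $W^u(y)\subset M(\ep)$ (the unstable manifold cannot escape along a normal direction because that would decrease $f$, contradicting $c_i>0$), while $W^s(y)$ extends into higher strata along the positive $u_i$ directions. The Morse--Smale hypothesis then yields $W^u(x)\inter W^s(z)$ of the expected dimension $\Ind(x)-\Ind(z)$, so $\dim \mcM(x,z)/\R = \Ind(x)-\Ind(z)-1$, exactly as in the closed case.

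Next, for compactness I would run the standard breaking argument: any sequence in $\mcM(x,z)/\R$ has a subsequence converging to a broken chain $y_0=x>y_1>\dots>y_l>y_{l+1}=z$ with $y_i \in \Crit(f)$. The crucial point is that the boundary-assisted breaking phenomenon of Figure \ref{breaking}(a) is excluded: such a breaking would require a flow segment from a lower stratum into a higher one, violating the monotonicity hypothesis. Hence only classical breakings at chains of critical points occur, and these yield precisely the factors appearing in the stated boundary decomposition. Gluing, associative by \refgluingassociative, realises every broken chain as a limit of unbroken trajectories and supplies collar neighbourhoods of the corresponding boundary faces, endowing $\mcMhat(x,z)$ with the $\langle k\rangle$-manifold structure, where $k$ counts the codimension-one faces of the compactification. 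The canonical smooth structure comes from the usual gluing charts, and associativity of gluing ensures compatibility between these charts at higher-codimension corners.

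The main obstacle I expect is the step in the third paragraph: one must verify carefully that the monotonicity hypothesis excludes \emph{every} non-classical breaking event at the boundary, and that the collection of remaining breakings is exhausted by chains $x>y_1>\dots>y_l>z$ in $\Crit(f)$. Once this is in place, the rest is essentially a stratum-by-stratum repetition of the proof of \refmodulispacecompact, with the corner Morse lemma replacing the classical one and \refrelativeMorseIndex guaranteeing that dimension counts are coherent across strata.
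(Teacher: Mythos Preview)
Your proposal is correct and follows essentially the same strategy as the paper: use the monotonicity hypothesis together with \refrelativeMorseIndex\ to rule out the boundary-assisted breaking of Figure~\ref{breaking}(a), thereby reducing the compactification analysis to the closed-manifold situation covered by \refmodulispacecompact. The paper's own proof is much terser---it cites Akaho and Ludwig for the low-index-difference cases and then simply invokes \refrelativeMorseIndex\ to say the general case ``goes analogously''---whereas you spell out the mechanism (corner Morse lemma, location of $W^u$ and $W^s$ relative to strata, explicit breaking argument), but the underlying idea is the same.
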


\begin{proof}
For $k=1$ and the situation of Figure \ref{breaking} (a) with $\Ind(x)-\Ind(z)\leq 2$ this has been proven by Akaho \cite{akaho}. On manifolds with corners and $\Ind(x)-\Ind(z)\leq 2$ the result is implied by Ludwig \cite{ludwig}. The general case goes analogously since \refrelativeMorseIndex\ reduces the possible breaking phenomena to the case of a closed smooth manifold where \refmodulispacecompact\ applies.
\end{proof}

Altogether, we have so far constructed a Morse function $f_1$ with certain properties on the Morse moduli spaces of $f_0$. Now we consider the critical points of $f_1$ and the Morse moduli spaces between them. \refmorseOnmanifoldsWithCorners\ states that we are again dealing with $\langle k \rangle$-manifolds for certain $k \in \N_0$. So we can repeat the construction of a radial Morse function compatible with lower strata now on the Morse moduli spaces of $f_1$, i.e. there exists a Morse function $f_2$ with similar properties as $f_1$ on the Morse moduli spaces $\mcMhat(x_1, z_1, f_{1 \left[ \begin{smallmatrix}x_0 \\ z_0 \end{smallmatrix} \right]} )$ of $f_1$ where $x_1$, $z_1 \in \Crit{f_{1 \left[ \begin{smallmatrix}x_0 \\ z_0 \end{smallmatrix} \right]}}$ and $x_0$, $z_0 \in \Crit(f_0)$.

\vspace{2mm}

Once $f_2 $ is constructed, its Morse moduli spaces are again $\langle k \rangle$-manifolds according to \refmorseOnmanifoldsWithCorners. By construction of the compactified Morse moduli spaces, we always decrease the dimension by (at least) one. Therefore this iteration process terminates after a finite number of repetitions when the moduli spaces become zero dimensional.

\vspace{2mm}

Summarizing the above paragraphs, we constructed Morse functions on the compactified Morse module spaces of Morse functions in such a way that each Morse function $f_i$ is compatible with its restriction to lower strata of the Morse moduli spaces of $f_{i-1}$ which have the structure of a cartesian product.


\section{(Almost) strict $n$-categories}

The definition of strict $n$-categories is due to Charles Ehresmann. We recall their definition from Leinster's book \cite{leinster} which gives two equivalent ways of defining strict $n$-categories. One can define it either recursively via enriched categories or direct by listing six properties which have to be satisfied. We focus on the latter definition.

\vspace{2mm}

The usual definition of a category will blend into this framework as a $1$-category. Since a category can be considered as a directed graph with structure the way of defining $n$-categories starts as follows.

\begin{definition}
\label{nglobular}
Given $n \in \N$, we define an {\em $n$-globular set} $Y$ to be a collection of sets $\{Y(l) \mid {0 \leq l \leq n} \}$ together with {\em source} and {\em target functions} $s$, $t: Y(l) \to Y(l-1)$ for $1 \leq l \leq n$ satisfying $s \circ s = s \circ t$ and $t \circ s = t \circ t$. Elements $A_l \in Y(l)$ are called {\em $l$-cells}.
\end{definition}

To visualize $n$-globular sets, one can think of the $l$-cells as $l$-dimensional disks and sketch them accordingly:
a $0$-cell $A_0 \in Y(0)$ is displayed as a point
\beqs
\bullet \ A_0
\eeqs
$0$-cells are sometimes also called {\em objects}.
A $1$-cell $A_1 \in Y(1)$ with $s(A_1)=A_0 \in Y(0)$ and $t(A_1)=B_0 \in Y(0)$ is sketched as an arrow (or $1$-disk) connecting the $0$-cells $A_0$ and $B_0$

\begin{figure}[H]

\begin{center}

\input{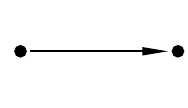_t}
\end{center}

\end{figure}

Sometimes $1$-cells are also called {\em morphisms}.
A $2$-cell (sometimes called {\em morphism between morphisms}) $A_2 \in Y(2)$ with $s(A_2)=A_1$, $t(A_2)=B_1 \in Y(1)$ and therefore $s(A_1)=s(B_1)=:A_0$ and $t(A_1)=t(B_1)=:B_0$ is sketched as an double arrow or $2$-disk connecting the $1$-cells $A_1$ and $B_1$

\begin{figure}[H]

\begin{center}

\input{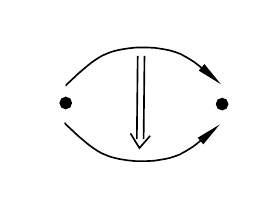_t}
\end{center}

\end{figure}

A $3$-cell $A_3 \in Y(3)$ with $s(A_3)=A_2$ and $t(A_3)=B_2$ is sketched as a $3$-disk or triple arrow `perpendicular to the sheet of paper'

\begin{figure}[H]

\begin{center}

\input{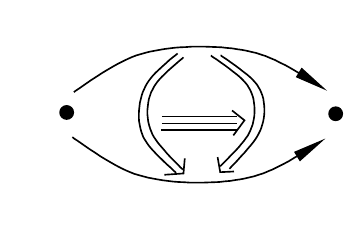_t}
\end{center}

\end{figure}

Generally, $l$-cells are represented by an $l$-arrow or a $l$-disk, although sketches clearly reach their limits.

\vspace{2mm}

Given $1$-cells $A_1$, $B_1 \in Y(1)$ with `matching' source and target conditions $s(A_1)=:A_0$, $t(A_1)=s(B_1)=: B_0$ and $t(B_1)=:C_0$ we are clearly tempted to `compose' $A_1$ and $B_1$ `along' $B_0$ like usual morphisms.

\begin{figure}[H]

\begin{center}

\input{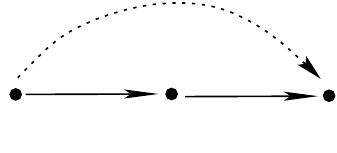_t}
\end{center}

\end{figure}

Now consider two $2$-cells $A_2$, $B_2 \in Y(2)$: There are two different `matching' conditions possible: On the one hand, we might have $t(A_2)=s(B_2)=:B_1$, i.e. we would like to compose $A_2$ and $B_2$ `along' the $1$-cell $B_1$ as sketched in

\begin{figure}[H] 

\begin{center}

\input{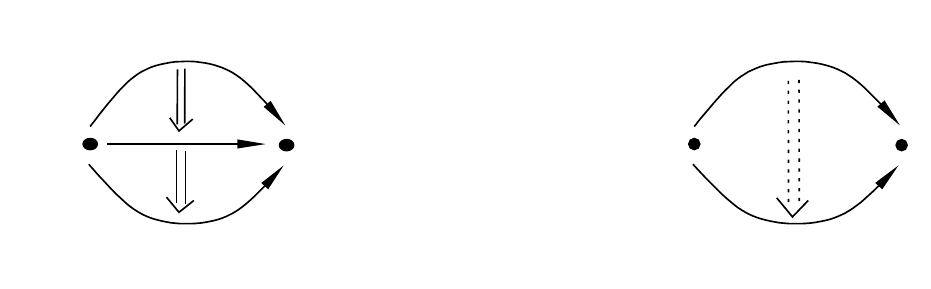_t}
\end{center}

\end{figure}

But, on the other hand, we also can have a matching condition along a $0$-cell as sketched in

\begin{figure}[H] 

\begin{center}

\input{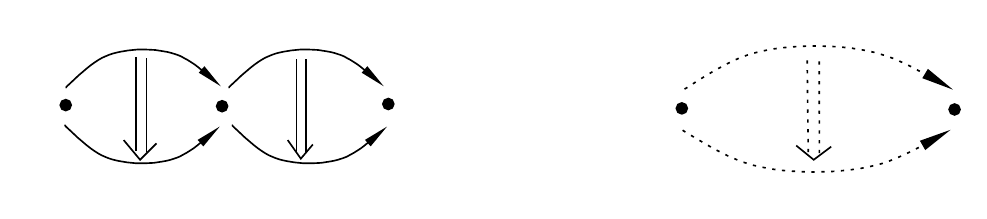_t}
\end{center}

\end{figure}

which also suggests a composition if we `first' compose the $1$-cells $A_1$ with $C_1$ and $B_1$ with $C_1$.
In fact, as we will see later, it will turn out that there are $l$ possible ways to compose $l$-cells, namely along $0$-cells, $1$-cells, \dots, $(l-1)$-cells. Given an $n$-globular set $Y$, we express the matching conditions by means of the set
\beqs
Y(l) \x_p Y(l) := \{(\mfxti, \mfx) \in Y(l) \x Y(l) \mid s^{l-p}(\mfxti)=t^{l-p}(\mfx)\}
\eeqs
$0 \leq p < l \leq n$. More precisely, $Y(l) \x_p Y(l)$ is the set of $l$-cells which can be composed along a $p$-cell.

\vspace{2mm}

Another important feature are {\em identity functions} on the $n$-globular set, i.e. a collection of functions $ \mathbf{1}: Y(l) \to Y(l+1) $ for $0 \leq l \leq n-1$ which assign to a $l$-cell $A_l\in Y(l)$ a certain $(l+1)$-cell ${\bf 1}_{A_l}$ with source and target $A_l$. For $0$-cells, this means

\begin{figure}[H]

\begin{center}

\input{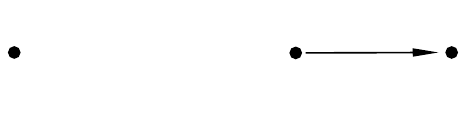_t}
\end{center}

\end{figure} 	

And for $1$-cells, this leads to

\begin{figure}[H]

\begin{center}

\input{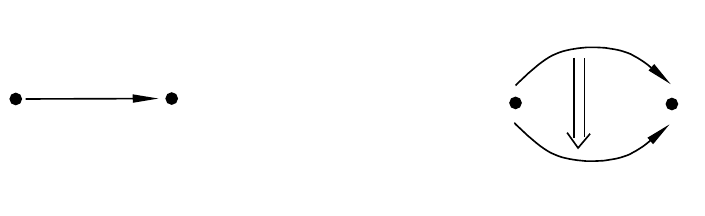_t}
\end{center}

\end{figure}

In the following definition, we will pose additional conditions on the composite and identities.

\begin{definition}
\label{ncategory}
Let $n \in \N$. A {\bf strict $n$-category $\mcY$} is an $n$-globular set $Y$ equipped with
\begin{itemize}
 \item 
a function $\circ_p\colon Y(l) \x_p Y(l) \to Y(l)$ for all $0 \leq p < l\leq n$. We set $\circ_p(C_l, A_l) =:C_l \circ_p A_l$ and call it {\bf composite} of $A_l$ and $C_l$.
\item
a function $\mathbf{1}\colon Y(l) \to Y(l+1) $ for all $0 \leq l < n$. We set $\mathbf{1}_{A_l}:=\mathbf{1}(A_l)$ and call it the {\bf identity} on $A_l$.
\end{itemize}
These have to satisfy the following axioms:
\begin{enumerate}[(a)]
 \item 
{\bf (Sources and targets of composites)} For $0 \leq p < l \leq n$ and $(C_l, A_l) \in Y(l) \x_p Y(l) $ we require
\begin{align*}
& \mbox{for } p=l-1 && s(C_l \circ_p A_l)=s(A_l) && \mbox{and} && t(C_l \circ_p A_l) = t(C_l), \\ 
& \mbox{for } p \leq l-2 && s(C_l \circ_p A_l)= s(C_l) \circ_p s(A_l) && \mbox{and} && t(C_l \circ_p A_l)=t(C_l) \circ_p t(A_l).
\end{align*}
\item
{\bf (Sources and targets of identities)} For $0 \leq l <  n$ and $A_l \in Y(l)$ we require 
\beqs
s(\mathbf{1}_{A_l})=A_l=t(\mathbf{1}_{A_l}).
\eeqs
\item
{\bf (Associativity)}  
For $0 \leq p < l \leq n$ and $A_l$, $C_l$, $E_l\in Y(l)$ with $(E_l, C_l)$, $(C_l, A_l) \in Y(l) \x_p Y(l)$ we require
\beqs
(E_l\circ_p C_l) \circ_p A_l = E_l \circ_p( C_l \circ_p A_l).
\eeqs 
\item {\bf (Identities)}
For $0 \leq p < l \leq n$ and $A_l \in Y(l)$ we require 
\beqs
\mathbf{1}^{l-p}(t^{l-p}(A_l)) \circ_p A_l = A_l = A_l \circ_p \mathbf{1}^{l-p}(s^{l-p}(A_l)).
\eeqs
\item {\bf (Binary interchange)}
For $0 \leq q < p < l \leq n$ and $A_l$, $C_l$, $E_l$, $H_l \in Y(l)$ with
\beqs
(H_l, E_l), (C_l, A_l) \in Y(l) \x_p Y(l) \mbox{ and } (H_l, C_l), (E_l,A_l) \in Y(l)\x_q Y(l)
\eeqs
we require
\beqs
(H_l \circ_p E_l) \circ_q (C_l \circ_p A_l) = (H_l \circ_q C_l) \circ_p (E_l \circ_q A_l).
\eeqs
\item {\bf (Nullary interchange)}
For $0 \leq p < l< n$ and $(C_l, A_l) \in Y(l) \x_p Y(l)$ we require $\mathbf{1}_{C_l} \circ_p \mathbf{1}_{A_l} = \mathbf{1}_{C_l \circ_p A_l}$.
\end{enumerate}
If $\mcY$ and $\mcZ$ are strict $n$-categories we define a {\bf strict $n$-functor $f$} as a map $f \colon Y \to Z$ of the underlying $n$-globular sets commuting with composition and identities. This defines a category {\bf Str-$n$-Cat} of strict $n$-categories.
\end{definition}

Slightly relaxing the requirements, we define

\begin{definition}
An {\bf almost strict $n$-category} satisfies the requirements of a strict $n$-category up to canonical isomorphism.
\end{definition}

The compatibility of the identities with the source and target functions in item (d) of \refncategory\ can be visualized via

\begin{figure}[H] 

\begin{center}

\input{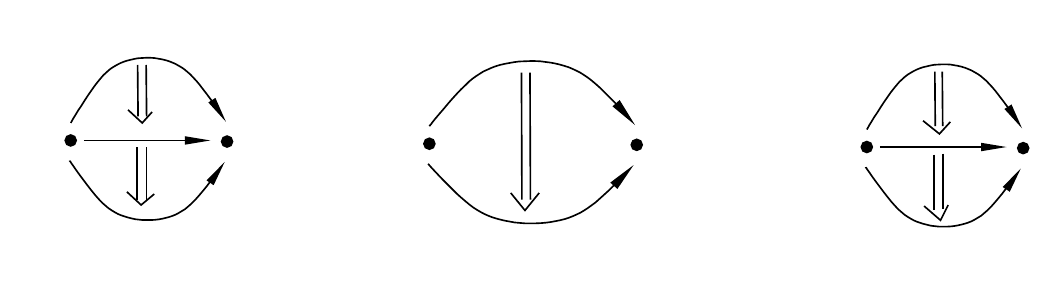_t}

\end{center}

\end{figure}

The binary interchange of item (e) in \refncategory\ can be sketched as

\begin{figure}[H] 

\begin{center}

\input{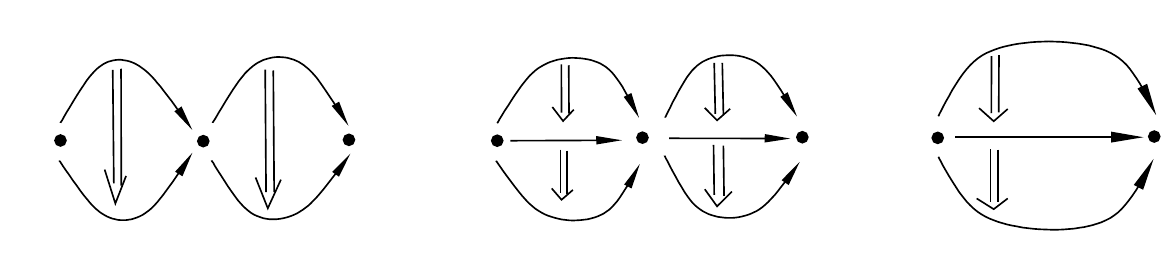_t}

\end{center}

\end{figure}

And the nullary interchange looks like

\begin{figure}[H] 

\begin{center}

\input{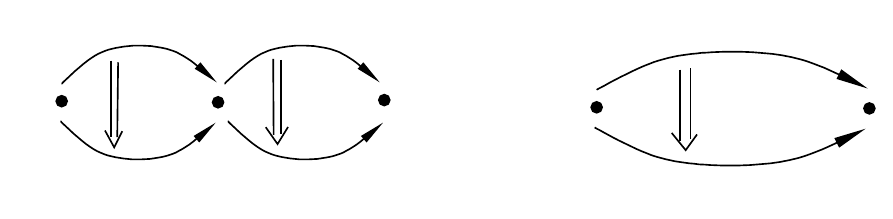_t}

\end{center}

\end{figure}


\section{The $n$-category of Morse trajectory spaces}

\label{sectionmorsencat}


\subsection{$n$-globular set of Morse moduli spaces}

In the following, we will define the $n$-globular set $X$ of Morse moduli spaces on which the $n$-category of Morse moduli spaces $\mcX$ is based.

\vspace{2mm}

Let $M$ be a compact $n$-dimensional $\langle k \rangle$-manifold $M$ with a Morse function $f_0$ (constructed as in the previous section) and a $f_0$-euclidean metric $g_0$. We set 
\beqs
X(0):=\{x_0 \mid x_0 \in \Crit(f_0)\}.
\eeqs
Given two critical points $x_0$, $y_0 \in \Crit(f_0)$, we consider the space $\mcMhat(x_0, y_0, f_0)$. On this space, we choose a Morse function
$f_{1 \left[ \begin{smallmatrix} x_0 \\ y_0 \end{smallmatrix}\right]}$ with $f_{1 \left[ \begin{smallmatrix} x_0 \\ y_0 \end{smallmatrix}\right]}$-euclidean metric $g_{1 \left[ \begin{smallmatrix} x_0 \\ y_0 \end{smallmatrix}\right]}$ as described in \refmodulicorners. We define
\beqs
X(1):=\{(x_1, \mcMhat(x_0, y_0, f_0)) \mid x_0, y_0 \in \Crit(f_0), \ x_1 \in \Crit(f_{1 \left[ \begin{smallmatrix} x_0 \\ y_0 \end{smallmatrix}\right]})\}.
\eeqs
The index of the Morse function $f_{1 \left[ \begin{smallmatrix} x_0 \\ y_0 \end{smallmatrix}\right]}$ or metric $g_{1 \left[ \begin{smallmatrix} x_0 \\ y_0 \end{smallmatrix}\right]}$ starts with the number of the level on which the function or metric lives and continues with the (history of) critical points which gave rise to the moduli space. The upper row states the source points and the lower row the target points. It is important to keep carefully track of the `history' of a moduli space.
Analogously, given $x_1$, $y_1 \in \Crit(f_{1 \left[ \begin{smallmatrix} x_0 \\ y_0 \end{smallmatrix}\right]})$, choose a Morse function 
$f_{2 \left[ \begin{smallmatrix} x_1, x_0 \\ y_1, y_0 \end{smallmatrix}\right]}$ and 
$f_{2 \left[ \begin{smallmatrix} x_1, x_0 \\ y_1, y_0 \end{smallmatrix}\right]}$-euclidean metric $g_{2 \left[ \begin{smallmatrix} x_1, x_0 \\ y_1, y_0 \end{smallmatrix}\right]}$ on 
$\mcMhat(x_1, y_1, f_{1 \left[ \begin{smallmatrix} x_0 \\ y_0 \end{smallmatrix}\right]})$ and let 
\beqs
X(2):= \left\{
\left(x_2, \mcMhat(x_1, y_1, f_{1 \left[ \begin{smallmatrix} x_0 \\ y_0 \end{smallmatrix}\right]})\right)
\left|
\begin{aligned}
& x_0, y_0 \in \Crit(f_0), \\
& x_1, y_1 \in \Crit(f_{1 \left[ \begin{smallmatrix} x_0 \\ y_0 \end{smallmatrix}\right]}), \\
& x_2 \in \Crit(f_{2 \left[ \begin{smallmatrix} x_1, x_0 \\ y_1, y_0 \end{smallmatrix}\right]})
\end{aligned}
\right.
\right\}.
\eeqs
We work with tupels (point, moduli space) instead of only the moduli spaces in order to obtain well-defined source and target function.
We iterate this process
and obtain for $2 \leq l \leq n$
\beqs
X(l):=
\left\{
\left(x_l, \mcMhat(x_{l-1}, y_{l-1}, 
f_{l-1 
\left[
\begin{smallmatrix}
x_{l-2}, \dots, x_0 \\
y_{l-2}, \dots, y_0
\end{smallmatrix}
\right]
}
)\right)
\left|
\begin{aligned}
& 0 \leq j \leq l-1, \\
& x_j, y_j \in \Crit(
f_{j 
\left[
\begin{smallmatrix}
x_{j-1}, \dots, x_0 \\
y_{j-1}, \dots, y_0
\end{smallmatrix}
\right]
}
), \\
& x_l \in \Crit(f_
{l 
\left[
\begin{smallmatrix}
x_{l-1}, \dots, x_0 \\
y_{l-1}, \dots, y_0
\end{smallmatrix}
\right]
}
)
\end{aligned}
\right.
\right\}.
\eeqs
Since dividing by the action in the construction of the compactified moduli spaces reduces the dimension by one, we can iterate this procedure at most $n$ times before the moduli spaces in question become zero dimensional and the iteration in turn becomes trivial.  

\vspace{2mm}

For $2 \leq l \leq n$, we define {\em source} and {\em target functions} 
\beqs
s\colon X(l) \to X(l-1) \qquad \mbox{and} \qquad t\colon X(l) \to X(l-1)
\eeqs
via
\begin{align*}
s\left(x_l, \mcMhat(x_{l-1}, y_{l-1}, 
f_{l-1 
\left[
\begin{smallmatrix}
x_{l-2}, \dots, x_0 \\
y_{l-2}, \dots, y_0
\end{smallmatrix}
\right]
}
)\right)
&:= 
\left(x_{l-1}, \mcMhat(x_{l-2}, y_{l-2}, 
f_{l-2 
\left[
\begin{smallmatrix}
x_{l-3}, \dots, x_0 \\
y_{l-3}, \dots, y_0
\end{smallmatrix}
\right]
}
)\right), \\
t\left(x_l, \mcMhat(x_{l-1}, y_{l-1}, 
f_{l-1 
\left[
\begin{smallmatrix}
x_{l-2}, \dots, x_0 \\
y_{l-2}, \dots, y_0
\end{smallmatrix}
\right]
}
)\right)
&:= 
\left(y_{l-1}, \mcMhat(x_{l-2}, y_{l-2}, 
f_{l-2 
\left[
\begin{smallmatrix}
x_{l-3}, \dots, x_0 \\
y_{l-3}, \dots, y_0
\end{smallmatrix}
\right]
}
)\right) \\
\end{align*}
and set for $s, t\colon X(1) \to X(0)$ 
\beqs
s\left (a_1, \mcMhat(x_0, y_0, f_0)\right ):= x_0 \quad \mbox{and} \quad t\left (a_1, \mcMhat(x_0, y_0, f_0)\right):= y_0.
\eeqs
\begin{lemma}
$X:=\{X(l) \mid 0 \leq l \leq n\}$ is an $n$-globular set.
\end{lemma}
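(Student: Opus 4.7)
The plan is to verify the two items required by \refnglobular: first that $X$ really is a collection of sets together with source and target maps $s,t\colon X(l)\to X(l-1)$ for $1\leq l\leq n$, and second that the globularity identities $s\circ s=s\circ t$ and $t\circ s=t\circ t$ hold on $X(l)$ for all $2\leq l\leq n$. The set structure and the fact that the maps land in $X(l-1)$ are essentially immediate from the recursive construction of $X(l)$ carried out in the previous paragraphs, but they deserve a sentence of justification: given an $l$-cell $(x_l,\mcMhat(x_{l-1},y_{l-1},f_{l-1[\cdots]}))\in X(l)$, the history of critical points recorded in the subscript of $f_{l-1[\cdots]}$ contains exactly the data of an element of $X(l-1)$, namely the critical point $x_{l-1}$ (respectively $y_{l-1}$) of $f_{l-1[\cdots]}$ together with the moduli space $\mcMhat(x_{l-2},y_{l-2},f_{l-2[\cdots]})$ on which $f_{l-1[\cdots]}$ is defined. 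So $s$ and $t$ are well-defined functions $X(l)\to X(l-1)$, and separately one checks the base case $s,t\colon X(1)\to X(0)$ directly from the formula.

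For the globularity conditions the key observation is that applying either $s$ or $t$ to an element of $X(l)$ strips off the top datum $x_l$, replaces it by $x_{l-1}$ or $y_{l-1}$, and records the same underlying moduli space $\mcMhat(x_{l-2},y_{l-2},f_{l-2[\cdots]})$. Consequently $s$ and $t$ of an $l$-cell differ only in the first coordinate of the resulting $(l-1)$-cell but agree on its second coordinate. I will then compute, for $l\geq 3$ and an arbitrary $(x_l,\mcMhat(x_{l-1},y_{l-1},f_{l-1[\cdots]}))\in X(l)$, the four compositions
\[
s\circ s,\quad s\circ t,\quad t\circ s,\quad t\circ t
\]
explicitly in terms of the history $x_{l-3},\dots,x_0,y_{l-3},\dots,y_0$. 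Both $s\circ s$ and $s\circ t$ produce $(x_{l-2},\mcMhat(x_{l-3},y_{l-3},f_{l-3[\cdots]}))$, and both $t\circ s$ and $t\circ t$ produce $(y_{l-2},\mcMhat(x_{l-3},y_{l-3},f_{l-3[\cdots]}))$, which is exactly the required identity.

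For the base case $l=2$ I will check the identities by hand: starting from $(x_2,\mcMhat(x_1,y_1,f_{1[\cdots]}))$ one computes $s\circ s=x_0=s\circ t$ and $t\circ s=y_0=t\circ t$ in $X(0)$. There is no real obstacle here; the whole argument is a bookkeeping verification and the only step that needs a bit of care is making sure that the notational subscript $\bigl[\begin{smallmatrix}x_{l-2},\dots,x_0\\ y_{l-2},\dots,y_0\end{smallmatrix}\bigr]$ encoding the ``history'' of a moduli space is interpreted consistently under one application of $s$ or $t$, so that the outcome of the second application is unambiguous. Once this is fixed, the identities $s\circ s=s\circ t$ and $t\circ s=t\circ t$ reduce to the trivial observation that the tail of the history list does not depend on whether we selected $x_{l-1}$ or $y_{l-1}$ in the first step.
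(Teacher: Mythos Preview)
Your proposal is correct and follows exactly the approach the paper takes: the paper's proof consists of the single sentence ``A short calculation yields $s\circ s=s\circ t$ and $t\circ t=t\circ s$,'' and you have simply written out that short calculation in full. Your key observation---that $s$ and $t$ applied to an $l$-cell differ only in the first coordinate while the underlying moduli space (and hence the entire history) is the same---is precisely what makes the globularity identities immediate.
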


\begin{proof}
A short calculation yields $s \circ s= s \circ t$ and $t \circ t=t \circ s$.
\end{proof}

Now we define the $l$-cells which can be composed along $p$-cells:
\beqs
X(l)\x_p X(l):=\{(C_l, A_l) \in X(l) \x X(l) \mid s^{l-p}( C_l)=t^{l-p}(A_l)  \}.
\eeqs
How do this elements look like?
For $l=1$ and $p=0$, an element $(C_1, A_1) \in X(1) \x_0 X(1)$ given by
\beqs
\left((c_1, \mcMhat(c_0, d_0, f_0)), (a_1, \mcMhat(a_0, b_0, f_0))\right) \in X(1) \x_0 X(1)
\eeqs
satisfies $c_0=b_0$. 
More generally, an element $(C_l, A_l) \in X(l) \x_p X(l)$ given by
\beqs
\left( \bigl(c_l,\mcMhat(c_{l-1}, d_{l-1}, f_{l-1 
\left[
\begin{smallmatrix}
c_{l-2}, \dots, c_0 \\
d_{l-2}, \dots, d_0
\end{smallmatrix}
\right]
})\bigr), 
\bigl(a_l,\mcMhat(a_{l-1}, b_{l-1}, f_{l-1 
\left[
\begin{smallmatrix}
a_{l-2}, \dots, a_0 \\
b_{l-2}, \dots, b_0
\end{smallmatrix}
\right]
})\bigr)\right ) 
\eeqs
is characterized by
\begin{equation}
\left\{
\begin{aligned}
& c_j = a_j && \mbox{ for } 0 \leq j \leq p-1, \\
& d_j = b_j && \mbox{ for } 0 \leq j \leq p-1, \\
& c_p  =b_p. &&
\end{aligned}
\right.
\end{equation}
Therefore we introduce the following more natural notation for tuples $(C_l, A_l) \in X(l) \x_p X(l)$. For $0 \leq j \leq p-1$, we set $a_j=c_j=:\al_j$ and $b_j=d_j=:\be_j$. For the index $p$, we set $a_p=:x_p$, $b_p=c_p =: y_p$ and $d_p=:z_p$. For $p+1 \leq j \leq l$, we keep the $a_j$, $b_j$, $c_j$ and $d_j$. For $(C_l, A_l) \in X(l) \x_p X(l)$, this new notation leads to
\begin{align*}
A_l&= \left(a_l,\mcMhat(a_{l-1}, b_{l-1}, f_{l-1 
\left[
\begin{smallmatrix}
a_{l-2}, \dots, a_{p+1}, x_p, \al_{p-1}, \dots, \al_0 \\
b_{l-2}, \dots, b_{p+1}, y_p, \be_{p-1}, \dots, \be_0
\end{smallmatrix}
\right]
})\right), \\
C_l &= \left( c_l,\mcMhat(c_{l-1}, d_{l-1}, f_{l-1 
\left[
\begin{smallmatrix}
c_{l-2}, \dots, c_{p+1}, y_p, \al_{p-1}, \dots, \al_0 \\
d_{l-2}, \dots, d_{p+1}, z_p, \be_{p-1}, \dots, \be_0
\end{smallmatrix}
\right]
})\right) 
\end{align*}
where one can easily see the meaning of being in $ X(l) \x_p X(l)$: Both $l$-cells arise, up to level $(p-1)$, from the same critical points 
$\left[
\begin{smallmatrix}
\al_{p-1}, \dots, \al_0 \\
\be_{p-1}, \dots, \be_0
\end{smallmatrix}
\right]$.
At level $p$, we have the matching condition 
$\begin{smallmatrix}
\left[
\begin{smallmatrix}
 x_p \\
 y_p
\end{smallmatrix}
\right] \\
\left[
\begin{smallmatrix}
 y_p \\
 z_p
\end{smallmatrix}
\right]
\end{smallmatrix}.
$
There are no additional conditions on the critical points on the higher levels 
$
\begin{smallmatrix}
\left[
\begin{smallmatrix}
a_{l-2} , \dots, a_{p+1} \\
b_{l-2}, \dots, b_{p+1}
\end{smallmatrix}
\right] \\
\left[
\begin{smallmatrix}
c_{l-2}, \dots, c_{p+1} \\
d_{l-2}, \dots, d_{p+1}
\end{smallmatrix}
\right]
\end{smallmatrix}
$
apart from the ones required in the definition of $X(l)$. We call 
$
\left[
\begin{smallmatrix}
a_{l-2}, \dots, a_{p+1}, x_p, \al_{p-1}, \dots, \al_0 \\
b_{l-2}, \dots, b_{p+1}, y_p, \be_{p-1}, \dots, \be_0
\end{smallmatrix}
\right]
$
the {\em history} of $A_l$ up to level $(l-2)$.
In this new notation, it holds for the critical points
\begin{align*}
&\mbox{For } 1 \leq j \leq p-1, &
\al_{j}, \be_{j} & \in \Crit\left(f_{j
\left[
\begin{smallmatrix}
 \al_{j-1},\dots, \al_0 \\
 \be_{j-1}, \dots, \be_0
\end{smallmatrix}
\right]}\right), 
\\
&&
x_p, y_p, z_p & \in \Crit\left(f_{p
\left[
\begin{smallmatrix}
 \al_{p-1},\dots, \al_0 \\
 \be_{p-1}, \dots, \be_0
\end{smallmatrix}
\right]}\right), \\
& \mbox{For } 1 \leq j \leq l-p, &
a_{p+j}, b_{p+j} & \in \Crit\left(f_{p+j
\left[
\begin{smallmatrix}
a_{p+j-1}, \dots a_{p+1}, x_p, \al_{p-1}, \dots, \al_0 \\
b_{p+j-1}, \dots b_{p+1}, y_p, \be_{p-1}, \dots, \be_0
\end{smallmatrix}
\right]}\right),   \\
& \mbox{For } 1 \leq j \leq l-p, &
c_{p+j}, d_{p+j} & \in \Crit\left(f_{p+j
\left[
\begin{smallmatrix}
c_{p+j-1}, \dots, c_{p+1}, x_p, \al_{p-1}, \dots, \al_0 \\
d_{p+j-1}, \dots, d_{p+1}, y_p, \be_{p-1}, \dots, \be_0
\end{smallmatrix}
\right]}\right).  
\end{align*}
If $j=1$ in the two expressions above then there are no $a$'s and $b$'s resp. $c$'s and $d$'s in the index of the function.


\subsection{The identities}

In order to turn the $n$-globular set $X(n)_{n \in \N}$ into an almost strict $n$-category, we need to define the composite and the identities. 

\vspace{2mm}

Let us start with the identities. They are supposed to be functions $\mathbf{1}\colon X(l) \to X(l+1)$ for $0 \leq l \leq n-1$. 
For $l=0$, the set $X(0)$ consists of the critical points $\Crit(f_0)$. Let $x_0 \in X(0)$ 
and identify $x_0$ with the moduli space $\mcMhat(x_0, x_0, f_0)$. Then identify $\mcMhat(x_0, x_0, f_0)$ with the only critical point 
$x_1\in \Crit(f_{1 
\left[ 
\begin{smallmatrix}
x_0 \\                                                                                                                                                                  
x_0                                                                                                                                                                 \end{smallmatrix}
 \right]})$ 
on $\mcMhat(x_0, x_0, f_0) $. Thus we have $x_1 \simeq \mcMhat(x_0, x_0, f_0) \simeq x_0$. With this in mind, we set
\beqs
\mathbf{1}_{x_0}:=\mathbf{1}(x_0):= (x_0, \mcMhat(x_0, x_0, f_0)).
\eeqs
For $l>0$, we set for $A_l= \left(a_l, \mcMhat(a_{l-1}, b_{l-1}, f_{l-1
\left[
\begin{smallmatrix}
 a_{l-2}, \dots, a_0 \\
b_{l-2}, \dots, b_0
\end{smallmatrix}
\right]}
)\right) \in X(l)$
\begin{align*}
\mathbf{1}_{A_l}
& := \mathbf{1}\left (a_l, \mcMhat(a_{l-1}, b_{l-1}, f_{l-1
\left[
\begin{smallmatrix}
 a_{l-2}, \dots, a_0 \\
b_{l-2}, \dots, b_0
\end{smallmatrix}
\right]}
)\right) \\
& := \left(a_l, \mcMhat(a_l, a_l, f_{l
\left[
\begin{smallmatrix}
 a_{l-1}, \dots, a_0 \\
b_{l-1}, \dots, b_0
\end{smallmatrix}
\right]
})\right) \\
& := \left(a_{l+1}, \mcMhat(a_l, a_l, f_{l
\left[
\begin{smallmatrix}
 a_{l-1}, \dots, a_0 \\
b_{l-1}, \dots, b_0
\end{smallmatrix}
\right]
})\right)
\end{align*}
where we again identified $a_{l+1} \simeq a_l$. For $0 \leq l \leq n-1$, this gives us functions 
\beqs
\mathbf{1} \colon X(l) \to X(l+1)  
\eeqs
which will be our candidates for the identity functions of an $n$-category generated by Morse moduli spaces.


\subsection{Motivation for the composite of Morse moduli spaces}

Now we address the composite of the future $n$-category.  
Since this paper also addresses readers from geometry and topology to whom the index consuming and somewhat confusing notation of $n$-categories may be unfamiliar, we will introduce the composite step by step for small $p$ and $l$. Experienced or hurried readers may skip ahead a few pages --- the general formula is given in the next subsection.



\vspace{2mm}

Consider the case $l=1$ and $p=0$. We have 
\begin{align*}
 X(1) \x_0 X(1) 
 =\left\{\left( (c_1, \mcMhat(y_0, z_0, f_0)), (a_1, \mcMhat(x_0, y_0, f_0))\right) 
\left| 
\begin{aligned}
& x_0, y_0, z_0 \in \Crit(f_0),\\
& a_1 \in \Crit(f_{1
\left[ \begin{smallmatrix} x_0 \\ y_0 \end{smallmatrix} \right]}), \\ 
& c_1 \in \Crit(f_{1 \left[ \begin{smallmatrix} y_0 \\ z_0 \end{smallmatrix} \right]})
\end{aligned}
\right.
\right\}
\end{align*}
and define the composite via
\begin{align*}
\left(c_1, \mcMhat(y_0, z_0, f_0)\right) \circ_0 \left(a_1, \mcMhat(x_0, y_0, f_0)\right):= \left((a_1,c_1), \mcMhat(x_0, z_0, f_0)\right).
\end{align*}
In terms of $n$-category language, we composed the two 1-cells $\mcMhat(x_0, y_0,f_0)$, $\mcMhat(y_0, z_0, f_0) \in X(1)$ along the 0-cell $y_0 \in X(0)$:

\begin{figure}[H] 

\begin{center}

\input{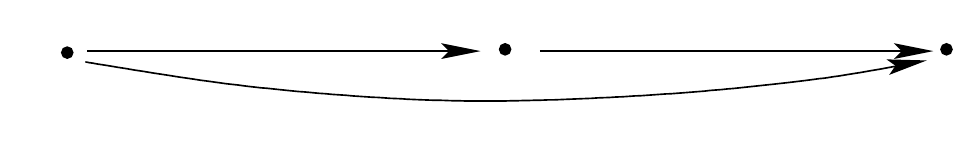_t}

\end{center}

\end{figure}

Geometrically, this describes the gluing procedure of Morse trajectories as lined out for instance in Schwarz \cite{schwarz} and sketched below.

\begin{figure}[H] 

\begin{center}

\input{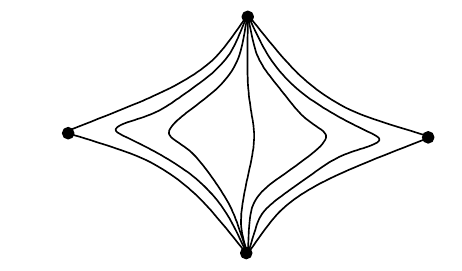_t}

\end{center}

\end{figure}

 $\mcMhat(x_0, y_0,f_0) \x \mcMhat(y_0, z_0, f_0)$ is contained in the boundary of $\mcMhat(x_0, z_0, f_0)$. 
Thus the point $(a_0, c_0)$ lies in (the boundary of) $\mcMhat(x_0, z_0, f_0)$. It is a critical point of the Morse function 
$f_{1
\left[\begin{smallmatrix} x_0 \\ z_0\end{smallmatrix} \right]}|_{\mcMhat(x_0, y_0,f_0) \x \mcMhat(y_0, z_0, f_0)}= 
f_{1\left[\begin{smallmatrix} x_0 \\ y_0\end{smallmatrix} \right]} + f_{1\left[\begin{smallmatrix} y_0 \\ z_0\end{smallmatrix} \right]}$. 


\vspace{2mm}


Now consider the case $l=2$ and $p=1$ and the space 
$X(2)\x_1 X(2)$
given by 
\begin{gather*}
\left\{\left( (c_2, \mcMhat(y_1, z_1, f_{1 \left[\begin{smallmatrix} \al_0 \\ \be_0 \end{smallmatrix} \right]})), (a_2, \mcMhat(x_1, y_1, f_{1\left[\begin{smallmatrix} \al_0 \\ \be_0 \end{smallmatrix} \right]}))\right)
\left|
\begin{aligned}
& \al_0, \be_0 \in \Crit(f_0), \\
& x_1, y_1, z_1 \in \Crit(f_{1\left[\begin{smallmatrix} \al_0 \\ \be_0 \end{smallmatrix} \right]}), \\
& a_2 \in \Crit(f_{2 \left[ \begin{smallmatrix} x_1, \al_0 \\ y_1, \be_0 \end{smallmatrix} \right]}) ,\\
& c_2 \in \Crit(f_{2 \left[ \begin{smallmatrix} y_1, \al_0 \\ z_1, \be_0 \end{smallmatrix} \right]})
\end{aligned}
\right.
\right\}
\end{gather*}
and define
\beqs
(c_2, \mcMhat(y_1, z_1, f_{1 \left[ \begin{smallmatrix} \al_0 \\ \be_0 \end{smallmatrix} \right]})) \circ_1 (a_2, \mcMhat(x_1, y_1, f_{1 \left[ \begin{smallmatrix} \al_0 \\ \be_0 \end{smallmatrix} \right]})):= \left( (a_2, c_2), \mcMhat(x_1, z_1, f_{1 \left[ \begin{smallmatrix} \al_0 \\ \be_0 \end{smallmatrix} \right]})\right).
\eeqs
Geometrically, we are doing the same as for $X(1) \x_0 X(1)$ except that we are on the space $\mcMhat(\al_0, \be_0, f_0)$ instead of $M$: we glue the Morse trajectories from $x_1$ to $y_1$ (i.e. $\mcMhat(x_1, y_1, f_{1\left[ \begin{smallmatrix} \al_0 \\ \be_0 \end{smallmatrix} \right]})$) with the Morse trajectories from $y_1$ to $z_1$ (i.e. $\mcMhat(y_1, z_1, f_{1\left[ \begin{smallmatrix} \al_0 \\ \be_0 \end{smallmatrix} \right]})$). In terms of $n$-category language, we glue the 2-cells $(a_2, \mcMhat(x_1, y_1, f_{1\left[ \begin{smallmatrix} \al_0 \\ \be_0 \end{smallmatrix} \right]}))$ and $(c_2, \mcMhat(y_1, z_1, f_{1 \left[ \begin{smallmatrix} \al_0 \\ \be_0 \end{smallmatrix} \right]}))$ along the 1-cell $(y_1, \mcMhat(\al_0, \be_0, f_0))$ as visualized below.

\begin{figure}[H] 

\begin{center}

\input{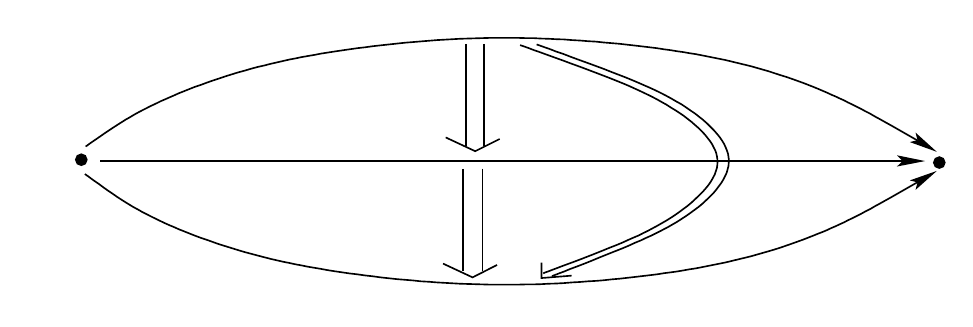_t}


\end{center}

\end{figure}


\subsection{General case: The composite $\circ_p$ of Morse moduli spaces}

After spending some time on motivating the composite for small $l >p \geq 0$, we now define the composite for arbitrary $l > p \geq 0$. To simplify notation, we treat the three cases $p=0$ and $ l-2 \geq p \geq 1$ and $p=l-1$ separately.

\subsubsection*{Case $l \in \N$ and $p=0$}

There are no $\al$'s and $\be$'s such that the `history index' starts with $x_0$, $y_0$, $z_0$. We set
\begin{align*}
 & \left( c_l, \mcMhat(c_{l-1}, d_{l-1}, f_{l-1 \left[ \begin{smallmatrix} c_{l-2}, \dots, c_1, y_0 \\ d_{l-2}, \dots, d_1, z_0 \end{smallmatrix} \right]}) \right) \\
& \quad \circ_0
\left(
a_l, \mcMhat(a_{l-1}, b_{l-1}, f_{l-1 \left[ \begin{smallmatrix} a_{l-2}, \dots, a_1, x_0 \\ b_{l-2}, \dots, b_1, y_0 \end{smallmatrix} \right]})
\right) \\
& :=
\left(
(a_l, c_l), \mcMhat\bigl((a_{l-1}, c_{l-1}), (b_{l-1}, d_{l-1}), f_{l-1 \left[ \begin{smallmatrix} (a_{l-2}, c_{l-2}), \dots, (a_1, c_1), x_0 \\ (b_{l-2}, d_{l-2}), \dots, (b_1, d_1), z_0 \end{smallmatrix} \right]}\bigr)
\right).
\end{align*}

\subsubsection*{Case $l \in \N$ and $l-2 \geq p \geq 1$}

We set
\begin{align*}
& \left(
c_l, \mcMhat(c_{l-1}, d_{l-1}, f_{l-1 \left[ 
\begin{smallmatrix} 
c_{l-2}, \dots, c_{p+1}, y_p, \al_{p-1}, \dots, \al_0 \\
d_{l-2}, \dots, d_{p+1}, z_p, \be_{p-1}, \dots, \be_0
\end{smallmatrix} \right]})
\right) \\
& \quad \circ_p
\left(
a_l, \mcMhat(a_{l-1}, b_{l-1}, f_{l-1 \left[ 
\begin{smallmatrix} 
a_{l-2}, \dots, a_{p+1}, x_p, \al_{p-1}, \dots, \al_0 \\
b_{l-2}, \dots, b_{p+1}, y_p, \be_{p-1}, \dots, \be_0
\end{smallmatrix} \right]})
\right) \\
& := 
\left(
(a_l,c_l), \mcMhat\bigl((a_{l-1}, c_{l-1}), (b_{l-1}, d_{l-1}), f_{l-1 \left[ 
\begin{smallmatrix} 
(a_{l-2}, c_{l-2}), \dots, (a_{p+1}, c_{p+1}), x_p, \al_{p-1}, \dots, \al_0 \\
(b_{l-2}, d_{l-2}), \dots, (b_{p+1}, d_{p+1}), z_p, \be_{p-1}, \dots, \be_0
\end{smallmatrix} \right]}\bigr)
\right) .
\end{align*}

\subsubsection*{Case $l \in \N$ and $p=l-1$}

There are no $a$'s, $b$'s, $c$'s and $d$'s in the `history index' which ends with $x_{l-1}$, $y_{l-1}$, $z_{l-1}$. We set 
\begin{align*}
& \left(
c_l, \mcMhat(y_{l-1}, z_{l-1}, f_{l-1 \left[ \begin{smallmatrix} \al_{l-2}, \dots, \al_0 \\ \be_{l-2}, \dots, \be_0 \end{smallmatrix} \right]})
\right)  \circ_{l-1}
\left(
a_l, \mcMhat(x_{l-1}, y_{l-1}, f_{l-1 \left[ \begin{smallmatrix} \al_{l-2}, \dots, \al_0 \\ \be_{l-2}, \dots, \be_0 \end{smallmatrix} \right]})
\right) \\
& \qquad := 
\left(
(a_l, c_l), \mcMhat(x_{l-1}, z_{l-1}, f_{l-1 \left[ \begin{smallmatrix} \al_{l-2}, \dots, \al_0 \\ \be_{l-2}, \dots, \be_0 \end{smallmatrix} \right]})
\right).
\end{align*}


\subsection{The $n$-category of Morse moduli spaces}

After defining an $n$-globular set, identity functions and a composite we formulate the main theorem of this paper.

\begin{theorem}
\label{morsencategory}
The $n$-globular set $X=\{X(l) \mid 0\leq l \leq n\}$ together with the above defined identity functions $\mathbf{1}$ and composites $\circ_p$ is an almost strict $n$-category $\mcX$.
\end{theorem}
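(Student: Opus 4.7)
The plan is to verify each of the six axioms of \refncategory\ for the structure $\mcX$, treating associativity and the interchange laws as the nontrivial ingredients and the rest as essentially bookkeeping from the explicit formulas for the composites and identities. Because we have permitted ``up to canonical isomorphism'', we will not have to worry about strict set-theoretic equality when tupling critical points or when identifying $\mcMhat(x,x)$-type stationary moduli spaces with their underlying critical points.

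First I would dispatch axioms (a), (b), (d), and (f) by direct inspection of the explicit formulas. For (a), each of the three case-by-case definitions of $\circ_p$ (for $p=0$, $1\le p\le l-2$, and $p=l-1$) visibly records in its `history' precisely the matching conditions for the composite $l$-cell, so applying $s$ or $t$ reads off either the source or the target on the appropriate coordinate. For (b), the identity $\mathbf{1}_{A_l}$ is built from $\mcMhat(a_l,a_l,\cdot)$ which has $a_l$ as its only critical point (under our identification $a_{l+1}\simeq a_l$), and hence $s(\mathbf{1}_{A_l})=a_l=t(\mathbf{1}_{A_l})$. For (d), inserting the identity amounts to gluing a stationary trajectory onto a non-stationary one, which canonically returns the original trajectory; this uses the convention that $\mcMhat(x,x)$ acts as the unit for the gluing map. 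For (f), both $\mathbf{1}_{C_l}\circ_p\mathbf{1}_{A_l}$ and $\mathbf{1}_{C_l\circ_p A_l}$ unfold to the same tuple $\bigl((a_{l+1},c_{l+1}),\mcMhat((a_l,c_l),(a_l,c_l),\cdot)\bigr)$, using the compatibility of the stationary moduli spaces with the product structure.

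The heart of the argument is (c), associativity. After expanding $(E_l\circ_p C_l)\circ_p A_l$ and $E_l\circ_p(C_l\circ_p A_l)$ using the formula in the appropriate case, both sides reduce to a single tuple whose underlying moduli space coordinate is obtained by gluing three composable Morse trajectories at the $p$-th level of the iterated construction. Equality of these two tuples is exactly the statement that the gluing map for Morse trajectories is associative, which is \refgluingassociative. Strictly speaking, what one gets is a canonical diffeomorphism of the glued moduli spaces rather than equality on the nose, which is why the conclusion is \emph{almost} strict rather than strict.

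For (e), the binary interchange with $0\le q<p<l$, I would observe that composition along a $q$-cell and composition along a $p$-cell occur in \emph{different slots} of the history index: the $q$-matching condition constrains coordinates up to level $q$, while $\circ_p$ modifies the slot at level $p$. Consequently both sides of
\[
(H_l\circ_p E_l)\circ_q(C_l\circ_p A_l)=(H_l\circ_q C_l)\circ_p(E_l\circ_q A_l)
\]
unfold to the same tuple: the history indices agree slot by slot, and the underlying moduli space arises by gluing the four components in the unique way compatible with the boundary product structure of \refmodulispacecompact. The hard part will be (c) together with the careful matching of histories in (e); both ultimately rest on the associativity of gluing and on the product structure of the boundary strata guaranteed by the tangential-gradient construction in \refmorseOnmanifoldsWithCorners, which prevents any `extra' breakings that would spoil the bookkeeping. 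Once all six axioms hold up to the canonical diffeomorphisms coming from gluing, we conclude that $\mcX$ is an almost strict $n$-category.
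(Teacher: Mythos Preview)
Your plan is correct and matches the paper's proof: both proceed by verifying axioms (a)--(f) of \refncategory\ via direct computation with the explicit formulas, invoking \refgluingassociative\ for (c) and canonical identifications for the ``almost strict'' defect.

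One small correction on (e): the two sides of the binary interchange do \emph{not} unfold to literally the same tuple with history indices agreeing slot by slot. In the paper's computation, the left side produces coordinates of the form $[(a_j,c_j),(e_j,h_j)]$ at levels $j>p$, while the right side produces $[(a_j,e_j),(c_j,h_j)]$; these differ by swapping the middle two entries of each $4$-tuple. So the canonical isomorphism needed here is not from gluing but from the symmetry of the cartesian product, and your claim that $\circ_q$ and $\circ_p$ act in ``different slots'' is misleading---both modify the coordinates at levels above $p$. This is still a canonical isomorphism, so the conclusion stands, but the mechanism is a coordinate permutation rather than a tautology.
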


\begin{proof}
{\em (a) Source and targets of composites:} Let $(C_l, A_l) \in X(l) \x_p X(l)$. Show that, for $p=l-1$, we have $s(C_l \circ_p A_l)= s(A_l)$ and $t(C_l \circ_p A_l)= t(C_l)$.

For $l\geq 1$, we compute 
\begin{align*}
& s(C_l \circ_{l-1} A_l) \\
& = s\left(
\left(c_l, \mcMhat(y_{l-1}, z_{l-1}, f_{
l-1
\left[
\begin{smallmatrix}
\al_{l-2}, \dots, \al_0 \\
\be_{l-2}, \dots, \be_0
\end{smallmatrix}
\right]
})
\right)
\circ_{l-1}
\left(a_l, \mcMhat(x_{l-1}, y_{l-1}, f_{
l-1
\left[ 
\begin{smallmatrix}
\al_{l-2}, \dots, \al_0\\
\be_{l-2}, \dots, \be_0
\end{smallmatrix}
\right]
})
\right)
\right) \\
& = s \left(
(a_l, c_l), \mcMhat(x_{l-1}, z_{l-1}, f_{l-1
\left[
\begin{smallmatrix}
\al_{l-2}, \dots, \al_0 \\
\be_{l-2}, \dots, \be_0
\end{smallmatrix}
\right]
})
\right) \\
& = \left(
x_{l-1}, \mcMhat(\al_{l-2}, \be_{l-2}, f_{l-2
\left[
\begin{smallmatrix}
\al_{l-3}, \dots, \al_0 \\
\be_{l-3}, \dots, \be_0
\end{smallmatrix}
\right]
})
\right) \\
& = s \left(
a_l, \mcMhat(x_{l-1}, y_{l-1}, f_{l-1
\left[
\begin{smallmatrix}
\al_{l-2}, \dots, \al_0 \\
\be_{l-2}, \dots, \be_0
\end{smallmatrix}
\right]
})
\right) \\
& = s(A_l).
\end{align*}
For $l=1$, we find 
\begin{align*}
& s\left((c_1, \mcMhat(y_0, z_0, f_0)) \circ_0 (a_1, \mcMhat(x_0, y_0, f_0)) \right) 
= s \left( (a_1, c_1), \mcMhat(x_0, z_0, f_0) \right) = x_0 \\
& = s \left( a_1, \mcMhat(x_0, y_0, f_0) \right).
\end{align*}
Similar computations yield $t(C_l \circ_{l-1} A_l) = t(C_l)$.

\vspace{2mm}

Furthermore, we have to prove the following. For $(C_l, A_l) \in X(l) \x_p X(l)$, show that, for $0 \leq p \leq l-2$, we have $s(C_l \circ_p A_l)= s(C_l) \circ_ p s(A_l)$ and $t(C_l \circ_p A_l)= t(C_l) \circ_p t(A_l)$.

For $p>0$, we compute
\begin{align*}
& s(C_l \circ_p A_l) \\
& = s \left(
(c_l, \mcMhat(c_{l-1}, d_{l-1}, f_{
l-1
\left[
\begin{smallmatrix}
c_{l-2}, \dots, c_{p+1}, y_p, \al_{p-1}, \dots, \al_0 \\
d_{l-2}, \dots, d_{p+1}, z_p, \be_{p-1}, \dots, \be_0
\end{smallmatrix}
\right]
}) 
\right.\\
& \qquad \qquad 
\left.
\circ_p \
(a_l, \mcMhat(a_{l-1}, b_{l-1}, f_{l-1
\left[
\begin{smallmatrix}
a_{l-2}, \dots, a_{p+1}, x_p, \al_{p-1}, \dots, \al_0 \\
b_{l-2}, \dots, b_{p-1}, y_p, \be_{p-1}, \dots, \be_0
\end{smallmatrix}
\right]
})
\right)\\
& = s \left(
(a_l, c_l), \mcMhat((a_{l-1}, c_{l-1}), (b_{l-1}, d_{l-1}), f_{l-1
\left[
\begin{smallmatrix}
 (a_{l-2}, c_{l-2}), \dots, (a_{p+1}, c_{p+1}), x_p, \al_{p-1}, \dots, \al_0 \\
 (b_{l-2}, d_{l-2}), \dots, (b_{p+1}, d_{p+1}), z_p, \be_{p-1}, \dots, \be_0
\end{smallmatrix}
\right]
})
\right) \\
& = 
\left(
(a_{l-1}, c_{l-1}), \mcMhat((a_{l-2}, c_{l-2}), (b_{l-2}, d_{l-2}), f_{l-2
\left[
\begin{smallmatrix}
 (a_{l-3}, c_{l-3}), \dots, (a_{p+1}, c_{p+1}), x_p, \al_{p-1}, \dots, \al_0 \\
 (b_{l-3}, d_{l-3}), \dots, (b_{p+1}, d_{p+1}), z_p, \be_{p-1}, \dots, \be_0
\end{smallmatrix}
\right]
})
\right) \\
& =
\left(
(c_{l-1}, \mcMhat(c_{l-2}, d_{l-2}, f_{
l-2
\left[
\begin{smallmatrix}
c_{l-3}, \dots, c_{p+1}, y_p, \al_{p-1}, \dots, \al_0 \\
d_{l-3}, \dots, d_{p+1}, z_p, \be_{p-1}, \dots, \be_0
\end{smallmatrix}
\right]
}) 
\right.\\
& \qquad \qquad 
\left.
\circ_p \
(a_{l-1}, \mcMhat(a_{l-2}, b_{l-2}, f_{l-2
\left[
\begin{smallmatrix}
a_{l-3}, \dots, a_{p+1}, x_p, \al_{p-1}, \dots, \al_0 \\
b_{l-3}, \dots, b_{p+1}, y_p, \be_{p-1}, \dots, \be_0
\end{smallmatrix}
\right]
})
\right)\\
& =
s \left(
c_l, \mcMhat(c_{l-1}, d_{l-1}, f_{
l-1
\left[
\begin{smallmatrix}
c_{l-2}, \dots, c_{p+1}, y_p, \al_{p-1}, \dots, \al_0 \\
d_{l-2}, \dots, d_{p+1}, z_p, \be_{p-1}, \dots, \be_0
\end{smallmatrix}
\right]
} 
\right)\\
& \qquad \qquad 
\circ_p \
s \left(
a_l, \mcMhat(a_{l-1}, b_{l-1}, f_{l-1
\left[
\begin{smallmatrix}
a_{l-2}, \dots, a_{p+1}, x_p, \al_{p-1}, \dots, \al_0 \\
b_{l-2}, \dots, b_{p+1}, y_p, \be_{p-1}, \dots, \be_0
\end{smallmatrix}
\right]
}
\right) \\
& = s(C_l) \circ_p s(A_l).
\end{align*}
The case $p=0$ follows similarly. And an analogous computation yields the claim for the target function.

\vspace{2mm}


{\em (b) Sources and targets of identities:} We need to show that $s(\mathbf 1_{A_l})=A_l = t(\mathbf 1_{A_l})$. Letting $A_l= (a_l, \mcMhat(a_{l-1}, b_{l-1}, f_{l-1
\left[
\begin{smallmatrix}
{a_{l-2}, \dots, a_0} \\
{b_{l-2}, \dots, b_0}
\end{smallmatrix}
\right]
})$, we compute
\begin{align*}
s \left(\mathbf 1_{
(a_l, \mcMhat(a_{l-1}, b_{l-1}, f_{l-1
\left[
\begin{smallmatrix}
{a_{l-2}, \dots, a_0} \\
{b_{l-2}, \dots, b_0}
\end{smallmatrix}
\right]
}))}
\right) 
&= s \left(a_l, \mcMhat(a_l, a_l, f_{l
\left[
\begin{smallmatrix}
{a_{l-1}, \dots, a_0} \\
{b_{l-1}, \dots, b_0}
\end{smallmatrix}
\right]
})
\right) \\
&= \left(a_l, \mcMhat(a_{l-1}, b_{l-1}, f_{l-1
\left[
\begin{smallmatrix}
{a_{l-2}, \dots, a_0} \\
{b_{l-2}, \dots, b_0}
\end{smallmatrix}
\right]
}) \right)
\end{align*}
and similar for the target function.

\vspace{2mm}


{\em (c) Associativity of the composite:} Given $0 \leq p <l \leq n$ and $(E_l, C_l)$, $(C_l, A_l) \in X(l)\x_p X(l)$, we need to prove $(E_l \circ_p C_l) \circ_p A_l = E_l \circ_p ( C_l \circ_p A_l)$.

We set 
\begin{align*}
A_l & = \left( a_l, \mcMhat(a_{l-1}, b_{l-1}, f_{l-1
\left[
\begin{smallmatrix}
a_{l-2}, \dots, a_{p+1}, w_p, \al_{p-1}, \dots, \al_0 \\
b_{l-2}, \dots, b_{p+1}, x_p, \be_{p-1}, \dots, \be_0
\end{smallmatrix}
\right]
}) \right), \\
C_l & = \left( c_l, \mcMhat(c_{l-1}, d_{l-1}, f_{l-1
\left[
\begin{smallmatrix}
c_{l-2}, \dots, c_{p+1}, x_p, \al_{p-1}, \dots, \al_0 \\
d_{l-2}, \dots, d_{p+1}, y_p, \be_{p-1}, \dots, \be_0
\end{smallmatrix}
\right]
}) \right), \\
E_l & = \left( e_l, \mcMhat(e_{l-1}, g_{l-1}, f_{l-1
\left[
\begin{smallmatrix}
e_{l-2}, \dots, e_{p+1}, y_p, \al_{p-1}, \dots, \al_0 \\
g_{l-2}, \dots, g_{p+1}, z_p, \be_{p-1}, \dots, \be_0
\end{smallmatrix}
\right]
}) \right)
\end{align*}
and compute
\begin{align*}
& (E_l \circ_p C_l) \circ_p A_l  \\
& =
\left(
(a_l, (c_l, e_l)), \mcMhat\bigl( (a_{l-1}, (c_{l-1}, e_{l-1})), (b_{l-1}, (d_{l-1}, g_{l-1})),F \bigr)
\right)
\end{align*}
where
\begin{align*}
 F:= f_{l-1
\left[
\begin{smallmatrix}
(a_{l-2}, (c_{l-2}, e_{l-2})), \dots, (a_{p+1}, (c_{p+1}, e_{p+1})), w_p, \al_{p-1}, \dots, \al_0 \\
(b_{l-2}, (d_{l-2}, g_{l-2})), \dots, (b_{p+1}, (d_{p+1}, g_{p+1})), z_p, \be_{p-1}, \dots, \be_0
\end{smallmatrix}
\right]
}.
\end{align*}
On the other hand, we obtain 
\begin{align*}
& E_l \circ_p (C_l \circ_p A_l) \\
&= \left(
((a_l, c_l), e_l), \mcMhat( ((a_{l-1}, c_{l-1}), e_{l-1}), ((b_{l-1}, d_{l-1}), g_{l-1}),\Fbar )
\right)
\end{align*}
where
\begin{align*}
 \Fbar:= f_{l-1
\left[
\begin{smallmatrix}
 ((a_{l-2}, c_{l-2}), e_{l-2}), \dots, ((a_{p+1}, c_{p+1}), e_{p+1}), w_p, \al_{p-1}, \dots, \al_0 \\
 ((b_{l-2}, d_{l-2}), g_{l-2}), \dots, ((b_{p+1}, d_{p+1}), g_{p+1}), z_p, \be_{p-1}, \dots, \be_0
\end{smallmatrix}
\right]
}.
\end{align*}
Geometers usually consider the cartesian product as associative, but if one wants to be rigorous, it is certainly associative up to canonical isomorphism. And the same holds for the gluing of Morse trajectories (cf. \refgluingassociative). Thus, possibly up to canonical isomorphism, $(E_l \circ_p C_l) \circ_p A_l = E_l \circ_p (C_l \circ_p A_l)$.
Note that for $l=1$ and $p=0$, the associativity of the composite reduces to the associativity of the gluing procedure:
\begin{align*}
& (c_1, \mcMhat(c_0, d_0, f_0)) 
\circ_0 \left( 
(b_1, \mcMhat(b_0, c_0, f_0)) 
\circ_0
(a_1, \mcMhat(a_0, b_0, f_0))
\right) \\
& = \left(
((a_1, b_1), c_1), \mcMhat(a_0, d_0, f_0)
\right) \\
& \stackrel{Th. \ref{gluingassociative}}{=}
\left(
(a_1,( b_1, c_1)), \mcMhat(a_0, d_0, f_0)
\right) \\
&= \left(
(c_1, \mcMhat(c_0, d_0, f_0)) 
\circ_0  
(b_1, \mcMhat(b_0, c_0, f_0)) 
\right)
\circ_0
(a_1, \mcMhat(a_0, b_0, f_0)).
\end{align*}

\vspace{2mm}


{\em (d) Identities:} For $0 \leq p < l \leq n$ and $A_l \in X(l)$, we have to show 
\beqs
\mathbf 1^{l-p}(t^{l-p}(A_l)) \circ_p A_l = A_l = A_l \circ_p \mathbf 1^{l-p}(s^{l-p}(A_l)).
\eeqs

Let 
$A_l = (a_l, \mcMhat(a_{l-1}, b_{l-1}, f_{l-1
\left[
\begin{smallmatrix}
 a_{l-2}, \dots, a_0 \\
 b_{l-2}, \dots , b_0
\end{smallmatrix}
\right]
}))
$ 
and compute 
\begin{align*}
& \mathbf 1^{l-p}\left(t^{l-p}\left(a_l, \mcMhat(a_{l-1}, b_{l-1}, f_{l-1
\left[
\begin{smallmatrix}
 a_{l-2}, \dots, a_0 \\
 b_{l-2}, \dots , b_0
\end{smallmatrix}
\right]
})\right)\right) \\
& \quad = \mathbf 1^{l-p} \left(
b_p, \mcMhat(a_{p-1}, b_{p-1}, f_{p-1
\left[
\begin{smallmatrix}
 a_{p-2}, \dots, a_0 \\
 b_{p-2}, \dots , b_0
\end{smallmatrix}
\right]
})
\right) \\
& \quad  = \left(
b_p, \mcMhat(b_p, b_p, f_{l-1
\left[
\begin{smallmatrix}
b_p, \dots, b_p, a_{p-1}, \dots, a_0 \\
b_p, \dots, b_p, b_{p-1}, \dots, b_0
\end{smallmatrix}
\right]
})
\right) 
\end{align*}
where we identified the critical point $b_p$ with the moduli space $\mcMhat(b_p, b_p)$ and with the critical point $b_{p+1}$ on the moduli space $\mcMhat(b_p, b_p)$ etc. Thus we obtained $l-p-1$ times $b_p$ in each line of the subscript.
Now we compute
\begin{align*}
& \left(
b_p, \mcMhat(b_p, b_p, f_{l-1
\left[
\begin{smallmatrix}
b_p, \dots, b_p, a_{p-1}, \dots, a_0 \\
b_p, \dots, b_p, b_{p-1}, \dots, b_0
\end{smallmatrix}
\right]
})
\right)
\circ_p
\left(a_l, \mcMhat(a_{l-1}, b_{l-1}, f_{l-1
\left[
\begin{smallmatrix}
 a_{l-2}, \dots, a_0 \\
 b_{l-2}, \dots , b_0
\end{smallmatrix}
\right]
})\right) \\
& = 
\left(
(a_l, b_p), \mcMhat((a_{l-1}, b_p), (b_{l-1}, b_p), f_{l-1
\left[
\begin{smallmatrix}
(a_{l-2},b_p), \dots, (a_{p+1}, b_p), a_p, a_{p-1}, \dots, a_0 \\
(b_{l-2}, b_p), \dots, (b_{p+1}, b_p), b_p, b_{p-1}, \dots, b_0
\end{smallmatrix}
\right]
})
\right).
\end{align*}
Since the product of a space with a point can be canonically identified with the space itself we conclude (up to canonical isomorphism)
\begin{align*}
= \left(a_l, \mcMhat(a_{l-1}, b_{l-1}, f_{l-1
\left[
\begin{smallmatrix}
 a_{l-2}, \dots, a_0 \\
 b_{l-2}, \dots , b_0
\end{smallmatrix}
\right]
})\right)
\end{align*}
which yields the claim. The proof for the source function requires the identification of 
\begin{align*}
 \left(
(a_p, a_l), \mcMhat\bigl((a_p, a_{l-1}), (a_p, b_{l-1}), f_{l-1
\left[
\begin{smallmatrix}
(a_p, a_{l-2}) , \dots, (a_p,a_{p+1}), a_p, a_{p-1}, \dots, a_0 \\
(a_p, b_{l-2}), \dots, (a_p, b_{p+1}), b_p, b_{p-1}, \dots, b_0
\end{smallmatrix}
\right]
}\bigr)
\right)
\end{align*}
with 
$\left(a_l, \mcMhat(a_{l-1}, b_{l-1}, f_{l-1
\left[
\begin{smallmatrix}
 a_{l-2}, \dots, a_0 \\
 b_{l-2}, \dots , b_0
\end{smallmatrix}
\right]
})\right).
$
\vspace{2mm}


{\em (e) Binary interchange:}
Given $0 \leq q <p <l \leq n$ and $(C_l, A_l)$, $(H_l, E_l) \in X(l)\x_pX(l)$ and $(H_l, C_l)$, $(E_l, A_l) \in X(l)\x_q X(l)$, we need to show $(H_l \circ_p E_l) \circ_q (C_l \circ_p A_l) = (H_l \circ_q C_l) \circ_p (E_l \circ _p A_l)$.

The requirements on $A_l$, $C_l$, $E_l$ and $H_l$ lead to
\begin{align*}
A_l & = 
\left(
a_l, \mcMhat(a_{l-1}, b_{l-1}, f_{l-1
\left[
\begin{smallmatrix}
a_{l-2}, \dots, a_{p+1}, x_p, \al_{p-1}, \dots, \al_q, \dots, \al_0 \\
b_{l-2}, \dots, b_{p+1}, y_p, \be_{p-1}, \dots, \be_q, \dots, \be_0
\end{smallmatrix}
\right]
})
\right), \\
C_l & = 
\left(
c_l, \mcMhat(c_{l-1}, d_{l-1}, f_{l-1
\left[
\begin{smallmatrix}
c_{l-2}, \dots, c_{p+1}, y_p, \al_{p-1}, \dots, \al_q, \dots, \al_0 \\
d_{l-2}, \dots, d_{p+1}, z_p, \be_{p-1}, \dots, \be_q, \dots, \be_0
\end{smallmatrix}
\right]
})
\right), \\
E_l & = 
\left(
e_l, \mcMhat\bigl(e_{l-1}, g_{l-1}, f_{l-1
\left[
\begin{smallmatrix}
e_{l-2}, \dots, e_{p+1}, \xbar_p, \ep_{p-1}, \dots, \ep_{q+1}, \be_q, \al_{q-1}, \dots, \al_0 \\
g_{l-2}, \dots, g_{p+1}, \ybar_p, \ga_{p-1}, \dots, \ga_{q+1}, \ga_q, \be_{q-1}, \dots, \be_0
\end{smallmatrix}
\right]
}\bigr)
\right), \\
H_l & = 
\left(
h_l, \mcMhat(h_{l-1}, i_{l-1}, f_{l-1
\left[
\begin{smallmatrix}
h_{l-2}, \dots, h_{p+1}, \ybar_p, \ep_{p-1}, \dots, \ep_{q+1}, \be_q, \al_{q-1}, \dots, \al_0 \\
i_{l-2}, \dots, i_{p+1}, \zbar_p, \ga_{p-1}, \dots, \ga_{q+1}, \ga_q, \be_{q-1}, \dots, \be_0
\end{smallmatrix}
\right]
})
\right).
\end{align*}
We compute
\begin{align*}
& (H_l \circ_p E_l) \circ_q (C_l \circ_p A_l) \\
& = 
\left(
(e_l, h_l), \mcMhat((e_{l-1}, h_{l-1}), (g_{l-1}, i_{l-1}), f_{l-1, \triangle^1}
)	
\right) \\
& \qquad \circ_q
\left(
(a_l, c_l), \mcMhat((a_{l-1}, c_{l-1}), (b_{l-1}, d_{l-1}), f_{l-1, \triangle^2}
)	
\right) \\
& = \left(
[(a_l, c_l), (e_l, h_l)], 
\mcMhat([(a_{l-1}, c_{l-1}), (e_{l-1}, h_{l-1})],[(b_{l-1}, d_{l-1}), (g_{l-1}, i_{l-1})], f_{l-1, \triangle^3})
\right)
\end{align*}
where
\begin{align*}
\triangle^1 
& :=
\left[
\begin{smallmatrix}
(e_{l-2}, h_{l-2}), \dots,(e_{p+1}, h_{p+1}), \xbar_p, \ep_{p-1}, \dots, \ep_{q+1}, \be_q, \al_{q-1}, \dots, \al_0 \\ 
(g_{l-2}, i_{l-2}), \dots,  (g_{p+1}, i_{p+1}), \zbar_p, \ga_{p-1}, \dots, \ga_{q+1}, \ga_q, \be_{q-1}, \dots, \be_0
\end{smallmatrix}
\right],\\ 
\triangle^2 
& :=
\left[
\begin{smallmatrix}
(a_{l-2}, c_{l-2}), \dots, (a_{p+1}, c_{p+1}), x_p, \al_{p-1}, \dots, \al_q, \dots, \al_0 \\
(b_{l-2}, d_{l-2}), \dots, (b_{p+1}, d_{p+1}), z_p, \be_{p-1}, \dots, \be_q, \dots, \be_0
\end{smallmatrix}
\right]
\end{align*}
and $\triangle^3$ is given by
\begin{align*}
\left[
\begin{smallmatrix}
[(a_{l-2}, c_{l-2}), (e_{l-2}, h_{l-2})], \dots,  [(a_{p+1}, c_{p+1}), (e_{p+1}, h_{p+1})],  (x_p, \xbar_p),(\al_{p-1}, \ep_{p-1}), \dots, (\al_{q+1}, \ep_{q+1}), \al_q, \al_{q-1}, \dots, \al_0 \\
[(b_{l-2}, d_{l-2}), (g_{l-2}, i_{l-2})], \dots, [(b_{p+1}, d_{p+1}), (g_{p+1}, i_{p+1})],(z_p, \zbar_p),(\be_{p-1}, \ga_{p-1}), \dots, (\be_{q+1}, \ga_{q+1}), \ga_q, \be_{q-1}, \dots, \be_0
\end{smallmatrix}
\right].
\end{align*}
On the other hand, we calculate
\begin{align*}
& (H_l \circ_q C_l) \circ_p (E_l \circ_p A_l) \\
& = 
\left(
(c_l, h_l), \mcMhat((c_{l-1}, h_{l-1}), (d_{l-1}, i_{l-1}), f_{l-1, \triangle^4}
)	
\right) \\
& \qquad \circ_q
\left(
(a_l, e_l), \mcMhat((a_{l-1}, e_{l-1}), (b_{l-1}, g_{l-1}), f_{l-1, \triangle^5}
)	
\right) \\
& = \left(
[(a_l, e_l), (c_l, h_l)], 
\mcMhat([(a_{l-1}, e_{l-1}), (c_{l-1}, h_{l-1})],[(b_{l-1}, g_{l-1}), (d_{l-1}, i_{l-1})], f_{l-1, \triangle^6})
\right)
\end{align*}
where
\begin{align*}
\triangle^4
& :=
\left[
\begin{smallmatrix}
(c_{l-2}, h_{l-2}), \dots, (c_{p+1}, h_{p+1}),(y_p, \ybar_p), (\al_{p-1}, \ep_{p-1}), \dots,  (\al_{q+1}, \ep_{q+1}), \al_q, \al_{q-1}, \dots, \al_0 \\
(d_{l-2}, i_{l-2}), \dots, (d_{p+1}, i_{p+1}), (z_p, \zbar_p),(\be_{p-1}, \ga_{p-1}), \dots, (\be_{q+1}, \ga_{q+1}), \ga_q, \be_{q-1}, \dots, \be_0
\end{smallmatrix}
\right],\\ 
\triangle^5
& :=
\left[
\begin{smallmatrix}
(a_{l-2}, e_{l-2}), \dots, (a_{p+1}, e_{p+1}),(x_p, \xbar_p),(\al_{p-1}, \ep_{p-1}), \dots,(\al_{q+1}, \ep_{q+1}), \al_q, \al_{q-1}, \dots, \al_0 \\
(b_{l-2}, g_{l-2}), \dots, (b_{p+1}, g_{p+1}),(y_p, \ybar_p),(\be_{p-1}, \ga_{p-1}), \dots,(\be_{q+1}, \ga_{q+1}), \ga_q, \be_{q-1}, \dots, \be_0 
\end{smallmatrix}
\right]
\end{align*}
and $\triangle^6$ is given by
\begin{align*}
\left[
\begin{smallmatrix}
[(a_{l-2}, e_{l-2}), (c_{l-2}, h_{l-2})], \dots,  [(a_{p+1}, e_{p+1}), (c_{p+1}, h_{p+1})],(x_p, \xbar_p), (\al_{p-1}, \ep_{p-1}), \dots, (\al_{q+1}, \ep_{q+1}), \al_q,\al_{q-1}, \dots,\al_0 \\
[(b_{l-2}, g_{l-2}), (d_{l-2}, i_{l-2})], \dots, [(b_{p+1}, g_{p+1}), (d_{p+1}, i_{p+1})], (z_p, \zbar_p), (\be_{p-1}, \ga_{p-1}), \dots,   (\be_{q+1}, \ga_{q+1}), \ga_q, \be_{q-1}, \dots, \be_0
\end{smallmatrix}
\right].
\end{align*}
$\triangle^3$ and $\triangle^6$ share the first half
\begin{align*}
\left[
\begin{smallmatrix}
(x_p, \xbar_p), (\al_{p-1}, \ep_{p-1}), \dots, (\al_{q+1}, \ep_{q+1}), \al_q,\al_{q-1}, \dots,\al_0 \\
(z_p, \zbar_p), (\be_{p-1}, \ga_{p-1}), \dots,   (\be_{q+1}, \ga_{q+1}), \ga_q, \be_{q-1}, \dots, \be_0
\end{smallmatrix}
\right]
\end{align*}
and differ in the second half only up to exchange of the second and third coordinate in the 4-tuples. Thus, up to canonical isomorphism, we obtain the claim.


\vspace{2mm}

{\em (f) Nullary interchange:} For $0 \leq p <l <n$ and $(C_l, A_l) \in X(l) \x _p X(l)$, we need to show $\mathbf 1_{C_l} \circ_p \mathbf 1_{A_l} = \mathbf 1_{C_l \circ_p A_l}$.

Let
\begin{align*}
A_l & = \left(a_l, \mcMhat(a_{l-1}, b_{l-1}, f_{l-1
\left[
\begin{smallmatrix}
a_{l-2}, \dots, a_{p+1}, x_p, \al_{p-1}, \dots, \al_0 \\
b_{l-2}, \dots, b_{p+1}, y_p, \be_{p-1}, \dots, \be_0
\end{smallmatrix}
\right]
})
\right), \\
C_l & = \left(c_l, \mcMhat(c_{l-1}, d_{l-1}, f_{l-1
\left[
\begin{smallmatrix}
c_{l-2}, \dots, c_{p+1}, y_p, \al_{p-1}, \dots, \al_0 \\
d_{l-2}, \dots, d_{p+1}, z_p, \be_{p-1}, \dots, \be_0
\end{smallmatrix}
\right]
})
\right)
\end{align*}
and compute
\begin{align*}
& \mathbf 1_{C_l} \circ_p \mathbf 1_{A_l} \\
& = \left(c_l, \mcMhat(c_l, c_l, f_{l
\left[
\begin{smallmatrix}
c_{l-1}, \dots, c_{p+1}, y_p, \al_{p-1}, \dots, \al_0 \\
d_{l-1}, \dots, d_{p+1}, z_p, \be_{p-1}, \dots, \be_0
\end{smallmatrix}
\right]
})
\right) \\
& \qquad  \circ_p
\left(a_l, \mcMhat(a_l, a_l, f_{l
\left[
\begin{smallmatrix}
a_{l-1}, \dots, a_{p+1}, x_p, \al_{p-1}, \dots, \al_0 \\
b_{l-1}, \dots, b_{p+1}, y_p, \be_{p-1}, \dots, \be_0
\end{smallmatrix}
\right]
})
\right) \\
& = \left((a_l, c_l), \mcMhat((a_l, c_l), (a_l, c_l), f_{l
\left[
\begin{smallmatrix}
(a_{l-1}, c_{l-1}), \dots,  (a_{p+1}, c_{p+1}), x_p, \al_{p-1}, \dots, \al_0 \\
 (b_{l-1}, d_{l-1}), \dots, (b_{p+1},d_{q+1}), z_p, \be_{p-1}, \dots, \be_0
\end{smallmatrix}
\right]
})
\right) \\
&=
\mathbf 1 \left((a_l, c_l), \mcMhat((a_{l-1}, c_{l-1}), (b_{l-1}, d_{l-1}), f_{l-1
\left[
\begin{smallmatrix}
(a_{l-2}, c_{l-2}), \dots,  (a_{p+1}, c_{p+1}), x_p, \al_{p-1}, \dots, \al_0 \\
 (b_{l-2}, d_{l-2}), \dots, (b_{p+1},d_{q+1}), z_p, \be_{p-1}, \dots, \be_0
\end{smallmatrix}
\right]
})
\right) \\
&= \mathbf 1_{C_l \circ_p A_l} .
\end{align*}
which finishes the proof of \refmorsencategory.
\end{proof}


\section{Examples}


\subsection{The standard $n$-sphere}

Consider the $n$-dimensional sphere $\mathbb S^n:=\{(p_1 , \dots , p_{n+1}) \in \R^{n+1} \mid p_0^2 + \cdots + p_n^2=1\}$ with the height function $f_0\colon \mathbb S^n \to \R$, $f_0(p_1, \dots, p_{n+1}):=p_{n+1}$ as Morse function and use the induced metric from $\R^{n+1}$. It has two critical points $x_0$ and $y_0$, namely the north and the south pole, with $\Ind(x_0)=n$ and $\Ind(y_0)=0$. 
Thus we have 
\beqs
X(0)=\{x_0, y_0\}.
\eeqs
The moduli space $\mcMhat(x_0, y_0, f_0)$ can be identified with $\mathbb S^{n-1}$ which has no boundary. Thus there are no lower dimensional boundary strata which could impose compatibility conditions on the chosen Morse function on the Morse moduli space. Let $f_1=f_{1 \left[ \begin{smallmatrix} x_0 \\ y_0 \end{smallmatrix} \right]}$ be the height function on $\mathbb S^{n-1}\simeq \mcMhat(x_0, y_0, f_0)$ with critical points $x_1$ (north pole) and $y_1$ (south pole). 
We obtain
\beqs
X(1)=\{(x_1, \mcMhat(x_0, y_0, f_0)), (y_1, \mcMhat(x_0, y_0, f_0))\}.
\eeqs
The moduli space $\mcMhat(x_1, y_1, f_1)$ can be identified with $\mathbb S^{n-2}$ and we choose as Morse function $f_2=f_{2 \left[ \begin{smallmatrix} x_1 , x_0 \\ y_1, y_0 \end{smallmatrix} \right]}$ the height function on $\mathbb S^{n-2}$. We get
\beqs
X(2)=\{(x_2, \mcMhat(x_1, y_1, f_1)), (y_2, \mcMhat(x_1, y_1, f_1))\}
\eeqs
where $x_2$ is the north pole and $y_2$ the south pole.
Iterating this procedure, we find eventually
\beqs
X(n)=\{(x_n, \mcM(x_{n-1}, y_{n-1}, f_{n-1})), (y_n, \mcMhat(x_{n-1}, y_{n-1}, f_{n-1}))\}
\eeqs
where $\mcMhat(x_{n-1}, y_{n-1}, f_{n-1})$ can be identified with $\mathbb S^0$ which again can be identified with the critical points $\{x_n\} \cup \{y_n\}$.
The process terminates with
\beqs
X(n+1)=\{(x_n, \mcMhat(x_n, x_n, f_n)), (y_n, \mcMhat(y_n, y_n, f_n))\}
\eeqs
where the `point' $x_n$ can be identified with the `space' $\mcMhat(x_n, x_n, f_n)$ and similar for $y_{n}$.

\vspace{2mm}

Now we want to look for the possible composites.
For sake of readability, we only consider the {\bf case} ${\mathbf n \mathbf = \mathbf 2}$. The general case goes analogously. Moreover, to simplify notation, we drop the Morse function in the moduli spaces. We compute
\begin{align*}
t(x_1, \mcMhat(x_0, y_0))& =y_0,  & s(x_1, \mcMhat(x_0, y_0))=x_0,  \\
t(y_1, \mcMhat(x_0, y_0))& =y_0,  & s(y_1, \mcMhat(x_0, y_0))=x_0.
\end{align*}
and conclude
\beqs
X(1) \x_0 X(1)= \emptyset.
\eeqs
And computing
\begin{align*}
s^2(x_2, \mcMhat(x_1, y_1)) &=s(x_1, \mcMhat(x_0, y_0)) =x_0, \\
t^2(x_2, \mcMhat(x_1, y_1)) & =s(y_1, \mcMhat(x_0, y_0)) =y_0, \\
s^2(y_2, \mcMhat(x_1, y_1)) &=s(x_1, \mcMhat(x_0, y_0))  =x_0, \\
t^2(y_2, \mcMhat(x_1, y_1)) & =s(y_1, \mcMhat(x_0, y_0)) =y_0
\end{align*}
yields
\beqs
X(2)\x_0 X(2)= \emptyset= X(2) \x_1 X(2).
\eeqs
Calculating
\begin{align*}
s^3(x_2, \mcMhat(x_2, x_2))& = & s^2(x_2, \mcMhat(x_1, y_1))& = & s (x_1, \mcMhat(x_0, y_0)) &=& x_0 , \\
t^3(x_2, \mcMhat(x_2, x_2))& = & t^2(x_2, \mcMhat(x_1, y_1)) & = & t(y_1, \mcMhat(x_0, y_0)) & = & y_0, \\
s^3(y_2, \mcMhat(y_2, y_2))& = & s^2(y_2, \mcMhat(x_1, y_1))& = & s (x_1, \mcMhat(x_0, y_0)) &=& x_0 , \\
t^3(y_2, \mcMhat(y_2, y_2))& = & t^2(y_2, \mcMhat(x_1, y_1)) & = & t(y_1, \mcMhat(x_0, y_0)) & = & y_0
\end{align*}
leads to
\beqs
X(3)\x_lX(3)=\emptyset \quad \mbox{for } l=0, 1, 2.
\eeqs
Geometrically the lack of composites is due to the fact that there are only two critical points on each level such that there is no gluing or breaking of Morse trajectories.


\subsection{The deformed $2$-sphere}

Let $M$ be the deformed 2-sphere 

\begin{figure}[h] 

\begin{center}

\input{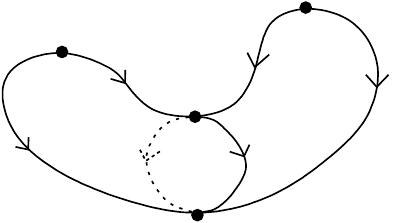_t}
\label{sphere}

\end{center}

\end{figure}

Choose the induced metric from $\R^3$ (suitably adjusted near the critical points) and take the height function, denoted by $f_0$, as a Morse function. The Morse trajectories are the negative gradient flow lines. For sake of readability, we drop the Morse function in the notion of the moduli spaces.
We have four critical points $\Crit(f_0)=\{w, x, y, z\}$ with Morse index $\Ind(w)=0$, $\Ind(x)=2$, $\Ind(y)=1$ and $\Ind(z)=2$. For the moduli spaces holds $\dim \mcMhat(x,w)= \dim\mcMhat(z,w)=1$ and $\dim \mcMhat(y,w)=\dim \mcMhat(x,y) = \dim \mcMhat(z,y)=0$ with cardinality $\#\mcMhat(y,w)=2$, $\#\mcMhat(x,y) =1$ and $\# \mcMhat(z,y)=1$. All other moduli spaces vanish. $\mcMhat(y,w)$ has two connected components which we denote by $\mcMhat(y,w) = \mcMhat(y,w)_a \cup \mcMhat(y,w)_b$.

We have $X(0)=\Crit(f_0)=\{w, x, y, z\}$. 
$\mcMhat(x,w)$ is an interval whose boundary is given by 
\beqs
\mcMhat(x,y) \x \mcMhat(y,w) = \{ (\mcMhat(x,y), \mcMhat(y,w)_a)\} \cup \{(\mcMhat(x,y), \mcMhat(y,w)_b)\} 
\eeqs
and similar for $\mcMhat(z,w)$. If we consider the (components of one of the) {\em zero} dimensional moduli spaces as {\em points} instead of {\em spaces}, we write $\mfmhat(\dots)$ instead of $\mcMhat(\dots)$. Now define the Morse function $f_1$ as follows. Assume $f_{1\left[ \begin{smallmatrix} x \\ w \end{smallmatrix} \right]}$ on $\mcMhat(x,w, f_0)$ and $f_{1\left[ \begin{smallmatrix} z \\ w \end{smallmatrix} \right]}$ on $\mcMhat(z,w,f_0)$ to be strictly monotone with the same (positive) maximum at $\left(\mfmhat(x,y), \mfmhat(y,w)_a\right)$ and the same (positive) minimum at $(\mfmhat(x,y), \mfmhat(y,w)_b)$ which are the only critical points.
With this notion, we find
\beqs
X(1)=
\left\{
\begin{gathered}
\left(\mfmhat(y,w)_a, \mcMhat(y,w)\right), \left( \mfmhat(y,w)_b, \mcMhat(y,w)\right), \\ 
\left(\mfmhat(x,y), \mcMhat(x,y)\right), \left(\mfmhat(z,y), \mcMhat(z,y)\right), \\
\left(\left(\mfmhat(x,y), \mfmhat(y,w)_a\right), \mcMhat(x,w)\right), 
\left(\left(\mfmhat(x,y), \mfmhat(y,w)_b\right), \mcMhat(x,w)\right), \\
\left(\left(\mfmhat(z,y), \mfmhat(y,w)_a\right), \mcMhat(z,w)\right), 
\left(\left(\mfmhat(z,y), \mfmhat(y,w)_b\right), \mcMhat(z,w)\right)
\end{gathered}
\right\}.
\eeqs
We compute 
\begin{align*}
X(1) \x_0 X(1)& = \{(\xiti, \xi) \in X(1) \x X(1) \mid s(\xiti) = t(\xi)\} \\
& = \left\{
\begin{gathered}
\left((\mfmhat(y,w)_a, \mcMhat(y,w)), (\mfmhat(x,y), \mcMhat(x,y))\right), \\
\left((\mfmhat(y,w)_b, \mcMhat(y,w)), (\mfmhat(x,y), \mcMhat(x,y))\right) ,\\
\left((\mfmhat(y,w)_a, \mcMhat(y,w)), (\mfmhat(z,y), \mcMhat(z,y))\right), \\
\left((\mfmhat(y,w)_b, \mcMhat(y,w)), (\mfmhat(z,y), \mcMhat(z,y))\right)
\end{gathered}
\right\}.
\end{align*}
We compute the composite
\begin{align*}
\left(\mfmhat(x,y), \mcMhat(x,y)\right) \circ_0 \left(\mfmhat(y,w)_a, \mcMhat(y,w)\right) 
 = \left( ( \mfmhat(x,y),\mfmhat(y,w)_a), \mcMhat(x,w)  \right).
\end{align*}
The other elements of $X(1) \x_0 X(1)$ work similarly. Geometrically we are gluing Morse trajectories.
Now abbreviate 
\begin{align*}
P & :=\mcMhat\left((\mfmhat(x,y), \mfmhat(y,w)_a), (\mfmhat(x,y), \mfmhat(y,w)_b)\right), \\
p & :=\mfmhat\left((\mfmhat(x,y), \mfmhat(y,w)_a), (\mfmhat(x,y), \mfmhat(y,w)_b)\right),\\
Q & := \mcMhat\left((\mfmhat(z,y), \mfmhat(y,w)_a), (\mfmhat(z,y), \mfmhat(y,w)_b)\right), \\
q & := \mfmhat\left((\mfmhat(z,y), \mfmhat(y,w)_a), (\mfmhat(z,y), \mfmhat(y,w)_b)\right)
\end{align*}
and we obtain
\beqs
X(2)=
\left\{
\begin{gathered}
\left. 
\begin{gathered}
\left(\mfmhat(y,w)_i, \mcMhat(\mfmhat(y,w)_i, \mfmhat(y,w)_i )\right)\\ 
\left(\mfmhat(r,y), \mcMhat(\mfmhat(r,y), \mfmhat(r,y) )\right), \\ 
(p,P), (q, Q)
\end{gathered}
\right| 
i \in \{a,b\}, r \in \{x,z\}
\end{gathered}
\right\}.
\eeqs
The appearing moduli spaces are singletons such that $X(l)$ for $l \geq 3$ will only contain `trivial' elements of the form $(\xi, \mcMhat(\xi, \xi))$.
We compute
\begin{align*}
& s^2 \left(\mfmhat(y,w)_i, \mcMhat(\mfmhat(y,w)_i, \mfmhat(y,w)_i )\right)
= s \left(\mfmhat(y,w)_i, \mcMhat(y,w  )  \right) 
= y, \\
& t^2 \left(\mfmhat(y,w)_i, \mcMhat(\mfmhat(y,w)_i, \mfmhat(y,w)_i )\right)
= t \left( \mfmhat(y,w)_i, \mcMhat(y,w)   \right) 
= w, \\
& s^2 \left(\mfmhat(r,y), \mcMhat(\mfmhat(r,y), \mfmhat(r,y) )\right) 
= s \left( \mfmhat(r,y), \mcMhat(r, y) \right)
= r \in \{x,z\}, \\
& t^2 \left(\mfmhat(r,y), \mcMhat(\mfmhat(r,y), \mfmhat(r,y) )\right) 
= t \left( \mfmhat(r,y), \mcMhat(r, y) \right)
= y , \\
& s^2(p,P)= s\left( (\mfmhat(x,y), \mfmhat(y,w)_a), \mcMhat(x,w) \right)= x ,\\
& t^2(p,P) = t\left( (\mfmhat(x,y), \mfmhat(y,w)_b), \mcMhat(x,w) \right)=w , \\
& s^2(q,Q)= s\left( (\mfmhat(z,y), \mfmhat(y,w)_a), \mcMhat(z,w) \right)= z ,\\
& t^2(q,Q) = t\left( (\mfmhat(z,y), \mfmhat(y,w)_b), \mcMhat(z,w) \right)=w 
\end{align*}
which implies
\begin{align*}
& X(2) \x_1 X(2) \\
& \quad  = \{(\xiti, \xi) \in X(2) \x X(2) \mid t(\xi)=s(\xiti)\} \\
& \quad =  
\left\{
\begin{gathered}
\left(
\left(\mfmhat(y,w)_i, \mcMhat(\mfmhat(y,w)_i, \mfmhat(y,w)_i )\right), 
\left(\mfmhat(y,w)_i, \mcMhat(\mfmhat(y,w)_i, \mfmhat(y,w)_i )\right)
\right), \\
\left(
\left(\mfmhat(r,y), \mcMhat(\mfmhat(r,y), \mfmhat(r,y) )\right) ,
\left(\mfmhat(r,y), \mcMhat(\mfmhat(r,y), \mfmhat(r,y) )\right) 
\right) \\
\mbox{for } i \in \{a,b\}, r \in \{x,z\}
\end{gathered}
\right\}
\end{align*}
and we compute for instance
\begin{align*}
& \left(\mfmhat(r,y), \mcMhat(\mfmhat(r,y), \mfmhat(r,y) )\right) \circ_1  \left(\mfmhat(r,y), \mcMhat(\mfmhat(r,y), \mfmhat(r,y) )\right) \\
& \quad = 
\left(
(\mfmhat(r,y),\mfmhat(r,y)), \mcMhat((\mfmhat(r,y),\mfmhat(r,y)), (\mfmhat(r,y),\mfmhat(r,y))  )
\right) \\
& \quad \simeq \left(\mfmhat(r,y), \mcMhat(\mfmhat(r,y), \mfmhat(r,y) )\right)
\end{align*}
which is due to the fact that we are working on a space which consists of a single point.
Moreover we have
\begin{align*}
& X(2) \x_0 X(2) \\
& \quad = \{(\xiti, \xi) \in X(2) \x X(2) \mid t^2(\xi)=s^2(\xiti) \} \\
& \quad =
\left\{
\begin{gathered}
\left(
(\mfmhat(y,w)_i, \mcMhat(\mfmhat(y,w)_i, \mfmhat(y,w)_i ), 
(\mfmhat(r,y), \mcMhat(\mfmhat(r,y), \mfmhat(r,y) )) 
\right) \\
 \mbox{for }i \in \{a,b\}, r \in \{x,z\}
\end{gathered}
\right\}
\end{align*}
and we compute
\begin{align*}
& \left(\mfmhat(y,w)_i, \mcMhat(\mfmhat(y,w)_i, \mfmhat(y,w)_i ) \right) \circ_0
\left(\mfmhat(r,y), \mcMhat(\mfmhat(r,y), \mfmhat(r,y) )\right) \\
& \quad = 
\left((\mfmhat(r,y), \mfmhat(y,w)_i), \mcMhat( ( \mfmhat(r,y)  ,\mfmhat(y,w)_i) ,( \mfmhat(r,y), \mfmhat(y,w)_i   ) \right).
\end{align*}



\begin{thebibliography}{30mm}


\bibitem[Ak] {akaho} {\sc Akaho, M.:} {\it Morse Homologies and manifolds with boundary}, Comm. in Contemp. Math. {\bf 9}, no. 3 (2007), 301 -- 334.

\bibitem[Ba] {baez} {\sc Baez, J.:} {\it An introduction to n-categories}, in {\it Category theory and Computer Science (Santa Margharita Ligure 1997)}, Lecture Notes in Computer Science 1290, Springer 1997.


\bibitem[Bo] {borceux} {\sc Borceux, F.:} {\it Handbook of categorical algebra 1: Basic category theory}, Cambridge University Press 1994.

\bibitem[Br] {braess} {\sc Braess, D.:} {\it Morse-Theorie f\"ur berandete Mannigfaltigkeiten}, Math. Ann. {\bf 208}, 133 -- 148 (1974).

\bibitem[Bu] {burghelea} {\sc Burghelea, D.:} {\it Smooth structure on the moduli space of instantons of a generic vector field.} Geometry-Exploratory Workshop on Differential Geometry and its Applications, 37 -- 59, Cluj Univ. Press, Cluj-Napoca, 2011.

\bibitem[Ce] {cerf} {\sc Cerf, J.:} {\it Topologie de certains espaces de plongements}, Bull. Soc. math. France {\bf 89} (1961), 227 -- 380.

\bibitem[Ch] {chekanov} {\sc Chekanov, Y.:} {\it Differential algebra of Legendrian links}, Invent. Math. {\bf 150} (2002), no. 3, 441 -- 483. 


\bibitem[CJS] {cohen-jones-segal} {\sc Cohen, R.; Jones, J.; Segal, G.:} {\it Morse theory and classifying spaces}, Preprint.


\bibitem[D] {douady} {\sc Douady, A.:} {\it Vari\'et\'es \`a bord anguleux et voisinages tubulaires}, S\'eminaire Henri Cartan (topologie diff\'erentielle) {\bf 14} 1961/62 no. 1.





\bibitem[EGH] {eliashberg-givental-hofer} {\sc Eliashberg, Y.; Givental, A.; Hofer, H.:} {\it Introduction to symplectic field theory}, GAFA 2000 (Tel Aviv, 1999). Geom. Funct. Anal. 2000, Special Volume, Part II, 560 -- 673.

\bibitem[GM] {goresky-macpherson} {\sc Goresky, M.; MacPherson, R.:} {\it Stratified Morse theory}, Springer, Berlin 1988.

\bibitem[Ha] {handron} {\sc Handron, D.:} {\it Generalized billiard paths and Morse theory for manifolds with corners}, Topology Appl. 126 (2002), no. 1-2, 83 -- 118. 

\bibitem[Ho] {hohloch} {\sc Hohloch, S.:} {\it On the image of the almost strict Morse $n$-category under almost strict $n$-functors}, to appear in Theory and Applications of Categories.

\bibitem[HoL] {hohloch-ludwig} {\sc Hohloch, S.; Ludwig, U.:} {\it Morse moduli spaces and opetopes}, in preparation.

\bibitem[J\"a] {jaenich} {\sc J\"anich, K.:} {\it On the classification of $o(n)$-manifolds}, Math. Annalen {\bf 176} (1968), 35 -- 76.

\bibitem[Jo] {joyce} {\sc Joyce, D.:} {\it On manifolds with corners}, Advances in geometric analysis, 225 -- 258, Adv. Lect. Math. (ALM), {\bf 21}, Int. Press, Somerville, MA, 2012.

\bibitem[Kh] {khovanov} {\sc Khovanov, M.:} {\it A categorification of the Jones polynomial}, Duke Mathematical Journal {\bf 101} (2000) 359 -- 426.

\bibitem[KM] {kronheimer-mrowka} {\sc Kronheimer, P.; Mrowka, T.:} {\it Monopoles and three-manifolds}, Cambridge University Press 2007.

\bibitem[Laud] {laudenbach} {\sc Laudenbach, F.:} {\it A Morse complex on manifolds with boundary}, Geom. Dedicata {\bf 153} (2011), 47 -- 57.

\bibitem[Laur] {laures} {\sc Laures, G.:} {\it On cobordism of manifolds with corners}, Trans. Amer. Math. Soc. {\bf 352} (2000), 5667 -- 5688.

\bibitem[Le] {leinster} {\sc Leinster, T.:} {\it Higher operads, higher categories}, London Mathematical Society Lecture Note Series 298, Cambridge University Press 2004.

\bibitem[Lu] {ludwig} {\sc Ludwig, U.:} {\it Morse-Smale-Witten complex for gradient-like vector fields on stratified spaces}, Singularity theory, 683 -- 713, World Sci. Publ., Hackensack, NJ, 2007.


\bibitem[Mi] {milnor} {\sc Milnor, J.:} {\it Morse theory}, Princeton University Press 1963.

\bibitem[Qi1] {qin1} {\sc Qin, L.:} {\it On moduli spaces and CW structures arising from Morse theory on Hilbert manifolds}, J. Topol. Anal. {\bf 2} (2010), no. 4, 469 -- 526.

\bibitem[Qi2] {qin2} {\sc Qin, L.:} {\it An application of topological equivalence to Morse theory}, arXiv:1102.2838

\bibitem[Qi3] {qin3} {\sc Qin, L.:} {\it On the associativity of gluing}, arXiv:1107.5527

\bibitem[Sa] {salamon} {\sc Salamon, D.:} {\it Lectures on Floer homology}, Symplectic geometry and topology (Park City, UT, 1997),  143 -- 229, IAS/Park City Math. Ser., 7, Amer. Math. Soc., Providence, RI, 1999.

\bibitem[Sch] {schwarz} {\sc Schwarz, M.:} {\it Morse homology}, Birkh\"auser 1993.



\bibitem[Weh] {wehrheim} {\sc Wehrheim, K.:} {\it Smooth structures on Morse trajectory spaces, featuring finite ends and associative gluing}, 
arXiv:1205.0713

\bibitem[WW] {wehrheim-woodward} {\sc Wehrheim, K.; Woodward, C.:} {\it Functoriality for Lagrangian correspondences in Floer theory}, Quantum Topol. {\bf 1} (2010), no. 2, 129 -- 170.

\bibitem[Wei] {weinstein} {\sc Weinstein, A.:} {\it Symplectic categories}, Port. Math. {\bf 67} (2010), no. 2, 261 -- 278.

\end{thebibliography}
\end{document}